\newtheorem{theorem}{Theorem}
\newtheorem{remark}[theorem]{Remark}
\newtheorem{lemma}[theorem]{Lemma}
\newtheorem{definition}[theorem]{Definition}
\begin{document}
\setlength{\baselineskip}{1.2\baselineskip}

\title [Dynamical behavior of a system modeling wave bifurcations]
{Dynamical behavior of a system modeling wave bifurcations with higher order viscosity}

\author{Tong Li$^\star$}
\address{$^\star$Department of Mathematics\\
        The University of Iowa, Iowa City\\
         IA, 52242, USA}
\email{tong-li@uiowa.edu}

\author{Xiaoyan Wang$^\dag$}
\address{$^\dag$Department of Mathematics\\
        Indiana University, Bloomington\\
         IN, 47405, USA}
\email{wang264@indiana.edu}

\author{Jinghua Yao$^\ddag$}
\address{$^\ddag$ Corresponding author, Department of Mathematics\\
        The University of Iowa, Iowa City\\
         IA, 52242, USA}
\email{jinghua-yao@uiowa.edu}

\maketitle

\begin{abstract}

We rigorously show that a class of systems of partial differential equations modeling wave bifurcations supports stationary equivariant bifurcation dynamics through deriving its full dynamics on the center manifold(s). A direct consequence of our result is that the oscillations of the dynamics are \textit{not} due to rotation waves though the system exhibits Euclidean symmetries.  The main difficulties of carrying out the program are: 1) the system under study contains multi bifurcation parameters and we do not know \textit{a priori} how they come into play in the bifurcation dynamics. 2) the representation of the linear operator on the center space is a $2\times 2$ zero matrix, which makes the characteristic condition in the well-known normal form theorem trivial. We overcome the first difficulty by using projection method.
We managed to overcome the second subtle difficulty by using a conjugate pair coordinate for the center space and applying duality and projection arguments. Due to the specific complex pair parametrization, we could naturally get a form of the center manifold reduction function, which makes the study of the current dynamics on the center manifold possible. The symmetry of the system plays an essential role in excluding the possibility of bifurcating rotation waves. 

{\bf Keywords:} Spectrum, resolvent, equivariant bifurcation, center manifold, symmetry; Implicit function theorem.\\

{\bf Mathematics Subject Classification(2010):} 37G; 35P; 34G; 34B

\end{abstract}

\maketitle

\section{Introduction}
\setcounter{equation}{0}
\setcounter{theorem}{0}

The current work is a continuation or more precisely a complimentary study of our former works \cite{Yao, LY} on dynamical behaviors of solutions to partial differential equations with symmetries. Here our primary goals include the study of a new bifurcation mechanism based on a different spectral scenario, the demonstration of a second viewpoint of computing the dynamics on center manifold, the illustration of the use of symmetry, and a comparison between the current work and our former works on equivariant Hopf bifurcations. More importantly, we assert that though there are still oscillations in our current dynamics and though the system exhibits Euclidean symmetry, more precisely, the system is $O(2)$-equivariant, the oscillations are not due to bifurcated rotation waves. We actually rigorously show that the oscillations are associated with a stationary equivariant bifurcation and the resulted $S^1$-family of waves can be associated with arbitrary nonzero wave numbers.  The current study and our former studies in \cite{Yao, LY} cover the generic bifurcation dynamics for the system \eqref{system} below, demonstrate that this model for bifurcating waves has abundant interesting dynamics and also shed light on our further investigation of viscous shock waves bifurcations (for this, see the discussion below on future work). One common feature in our work here and \cite{Yao, LY} is that we use Fourier analysis to decompose the problem to get admissible critical configuration points for our bifurcation analysis. This feature has similarity with E. Hopf's study \cite{Ho} on fluid turbulence and with L. Rayleigh's study \cite{Ra} on fluid stability. Meanwhile, we hope that our work could also shed some light on the vanishing viscosity method for which $a, \varepsilon\rightarrow 0^+$. With these goals in mind, we will adopt a similar organizational structure as in our former works for the ease of comparisons. 

\subsection{The system} Let us first recall the system we are going to study and some physical background (see also \cite{Yao, LY}). In this work, we continue to study the following class of systems in continuum mechanics
\begin{equation}\label{system}
\begin{cases}
\partial_t \tau-\partial_x u=-a\partial^4_x\tau,\\
\partial_t u-\partial_x\sigma(\tau)=-\delta\partial^2_x
u-\varepsilon\partial^4_x u
\end{cases}
\end{equation}
on the spatial periodic domain $\mathbb T^1=\mathbb{R}^1/[-M, M]$ where $M$ is any positive constant.
In system (\ref{system}), $\tau=\tau(x,t)$ and $u=u(x,t)$ are the scalar unknown functions. The scalar function $\sigma(\tau)$ is usually called
flux function in mathematics and pressure law in physics. The three parameters $a, \delta,\varepsilon$ are called diffusion coefficients. We emphasize that the domain is $\mathbb{R}^1/[-M, M]$, which means that we have periodic boundary conditions.

Without loss of generality, we can always consider the case $M=\pi$ and $\varepsilon=1$ or 
else we can do the  following two successive scalings and renamings given by
$$t\mapsto \bar{t}=\frac{\pi}{M}t,\,x\mapsto \bar{x}=\frac{\pi}{M}x,\, a\mapsto\bar{a}= \frac{\pi^3}{M^3}a,\, \delta\mapsto\bar{\delta}=\frac{\pi}{M}\delta,\, \varepsilon\mapsto\bar{\varepsilon}= \frac{\pi^3}{M^3}\varepsilon.$$
and 
 $$t\mapsto\tilde t=\varepsilon t, x\mapsto\tilde x=x, u\mapsto \tilde u=u, \tau\mapsto  \tilde \tau=\varepsilon \tau,$$ 
 $$a\mapsto\tilde a= \varepsilon^{-1}a, \sigma(\tau)\mapsto\tilde \sigma(\tilde\tau)=
 \varepsilon^{-1}\sigma(\varepsilon^{-1}\tilde\tau)=\varepsilon^{-1}\sigma(\tau), 
 \delta\mapsto\tilde\delta=\varepsilon^{-1}\delta.$$ 
  
 Meanwhile, we observe that any constant state $(\tau_0, u_0)$ is a solution to system (\ref{system}).
We consider the constant state $(0, 0)$ without loss of generality. This is obvious by first choosing 
$(\tau_0, u_0)$ in the physical range and then using the invariance of system (\ref{system}) in the 
translation group actions $u\mapsto u+h$ for any $h\in \mathbb{R}^1$ and redefining 
$\sigma(\tau)$ by $\sigma(\tau_0+\tau)$. The above constant states are usually referred to as uniform states or
homogeneous states in physics literature.

Systems of form \eqref{system} are generic in classical continuum mechanics (\cite{Ba, Da, EW} 
and the references therein) as they describe 
Newton's Second Law of motion, in gas dynamics, for example the $p$-system (see in particular Chapter 2 of 
Dafermos \cite{Da} and Nishida \cite{Ni}). When $M=\infty$, systems of form \eqref{system} are
also connected with the Kuramoto-Sivashinsky and related systems when we seek traveling waves 
solutions (see \cite{EW, Go, KS, KT, Si, SM}).

If we regard the term $\begin{pmatrix}-a\partial_x^4 \tau\\ -\varepsilon\partial_x^4 u\end{pmatrix}$ as vanishing viscosity term in the system
\eqref{system} and study the process $a\rightarrow 0+,\varepsilon\rightarrow 0+$, this is the vanishing viscosity method in partial differential equations for the well-known $p$-system in gas dynamics.
Of course, this requires $a>0$ and $\varepsilon>0$. However, in our work here, we are not restricted to the case with $a>0$ and $\varepsilon>0$ which gives dissipation. We will see later
that as long as $(a_c, \delta_c)$ is an admissible critical configuration point, we can get bifurcation dynamics under necessary conditions to be specified later. If there are admissible critical configuration point $(a_c,\delta_c)$ such that $a_c>0$ after scalings and renaming, we could scale back and analyze the vanishing viscosity limit process.  Our bifurcation results in particular indicate that in such cases we should avoid the parameter ranges which lead to bifurcations during the limiting process when we use the vanishing viscosity method.

\subsection{Main Results}

In order to state our main results clearly, we first introduce the admissible critical configuration set $\mathcal A(k_0)$ for an arbitrary nonzero integer $k_0$. Actually, we will see that $\mathcal A(k_0)=\mathcal A(-k_0)$. Hence we may regard $k_0$ as a positive integer in the remaining of the paper.

\begin{definition}\label{def}
For a given positive integer $k_0$, we say that the pair $(a_c,\delta_c)\in \mathbbm R^2$ is an admissible critical configuration point if the following three conditions are fulfilled:

(a) $ a_c k_0^4(k_0^2-\delta_c )+\sigma'(0)=0$.

(b) for any nonzero integer $k$ such that $ |k|\neq |k_0|$ and nonzero real number $\omega$,
$$[a_c k^4(k^4-\delta_c k^2)+\sigma'(0)-\omega^2]+i[(a_c+1)k^4-\delta_c k^2]\neq 0.$$

(c) $(a_c+1)k_0^2-\delta_c\neq 0$.


\noindent Consequently, $\mathcal A(k_0)$ is the collection of all admissible critical configuration points $(a_c,\delta_c)$ associate with $k_0$.

\end{definition}

The conditions $(a)-(c)$ characterize a spectral scenario of the linear operator $\mathcal L$ (see Section 3) and exactly makes all the denominators appearing later non-vanishing. Now we are in a position to state our main results.

\begin{theorem}\label{thm}
Let $(a_c,\delta_c)$ be any admissible critical configuration point, $\mu=(\delta_c-k_0^2)(a-a_c)+a_c(\delta-\delta_c)$. System \eqref{system} or equivalently $\eqref{perturb}$ admits a center manifold reduction and supports a stationary equivariant $O(2)$-bifurcation from any uniform solution (take $(\tau, u)=(0,0)$ without loss of generality) near $\mu=0$ in the Hilbert space $Y$ consisting of functions in $L^2_{per}(-\pi, \pi)$ with
zero mean over one period when the parameters $a, \delta$ varies around $(a_c, \delta_c)$. The full dynamics on the center manifold(s) has the following form
\begin{equation}\label{rd}
\frac{dA}{dt}=\frac{k_0^4\mu A}{(a_c+1)k_0^2-\delta_c} +\frac{1}{(a_c+1)k_0^2-\delta_c}\Big(\frac{\sigma''(0)^2}{6a_c k_0^4(21k_0^2-5\delta_c)}-\frac{\sigma'''(0)}{2}\Big)
 A^2A^*+O(|A|^5),
\end{equation}
where $A=A(t)\in\mathbb{C}^1$ and the complex conjugate pair $(A(t), A(t)^*)$ gives the coordinate of the center space and represents the dynamics for $(\tau, u)$ in the center manifold and the $O(|A|^5)$ term is given by $\sum_{j=2}^{m}\mathbbm c_j |A|^{2j+1}A+O(|A|^{2m+3})$ with all the $\mathbbm c_j$ real if $\sigma(\cdot)\in C^{2m+1}$ around $0$.
\end{theorem}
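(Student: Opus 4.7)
The plan is to reformulate system~\eqref{system} as an abstract evolution equation
\[
\frac{d}{dt}\begin{pmatrix}\tau\\ u\end{pmatrix}=\mathcal L(a,\delta)\begin{pmatrix}\tau\\ u\end{pmatrix}+\mathcal N(\tau,u)
\]
on the Hilbert space $Y$, where $\mathcal N$ collects the quadratic and higher order contributions from $\sigma(\tau)-\sigma'(0)\tau$. Passing to the Fourier basis $\{e^{ikx}\}_{k\in\mathbb Z\setminus\{0\}}$ block-diagonalizes $\mathcal L$ into $2\times 2$ matrices $M_{k}$ whose characteristic polynomial reads
\[
\lambda^{2}+k^{2}\bigl[(a+1)k^{2}-\delta\bigr]\lambda+k^{2}\bigl[ak^{4}(k^{2}-\delta)+\sigma'(0)\bigr]=0.
\]
Condition~(a) of Definition~\ref{def} forces the constant term to vanish at $k=\pm k_{0}$, producing a simple zero eigenvalue; condition~(c) keeps the second eigenvalue at those modes nonzero; and condition~(b) prevents any other Fourier block from touching the imaginary axis. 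Hence the center subspace $Y_{c}\subset Y$ is two-dimensional and the hyperbolic complement $Y_{h}$ is spectrally separated from $i\mathbb R$. I would parametrize $Y_{c}$ by the conjugate pair $(A,A^{*})\in\mathbb C^{2}$ using kernel eigenvectors $\zeta_{0},\zeta_{0}^{*}$ of $M_{\pm k_{0}}$, so that real solutions correspond to the real slice of this complexified coordinate.

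With $Y=Y_{c}\oplus Y_{h}$ in hand, I would verify the hypotheses of the parameter-dependent center manifold theorem (smoothness of $\mathcal N$ from $\sigma\in C^{2m+1}$, sectoriality of $\mathcal L$ on $Y_{h}$, uniform resolvent bounds inherited from condition~(b)) to obtain a locally invariant smooth graph $(\tau,u)=A\zeta_{0}+A^{*}\zeta_{0}^{*}+h(A,A^{*},\mu)$ with $h=O(|A|^{2}+|\mu|\,|A|)$. The crucial subtlety, flagged in the abstract, is that $\mathcal L|_{Y_{c}}$ is the zero $2\times 2$ matrix, so the standard normal-form prescription degenerates and one cannot simply read off the projection from a Jordan structure of $\mathcal L$. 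I would bypass this by explicitly constructing adjoint eigenfunctions $\zeta_{0}^{\dagger},\zeta_{0}^{\dagger *}$ of $\mathcal L^{*}$ for the zero eigenvalue and defining the spectral projector $\Pi_{c}$ onto $Y_{c}$ through the duality pairing in $Y$; this is the precise content of the duality/projection argument on which the remaining computations depend. The reduction function $h$ is then obtained order by order from the invariance identity
\[
(\partial_{A}h)\dot A+(\partial_{A^{*}}h)\dot A^{*}=\mathcal L h+(I-\Pi_{c})\,\mathcal N(A\zeta_{0}+A^{*}\zeta_{0}^{*}+h),
\]
inverting $\mathcal L$ mode by mode on $Y_{h}$. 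At quadratic order $h$ acquires components at Fourier modes $0$ and $\pm 2k_{0}$; at $\pm 2k_{0}$ the solvability determinant reduces, after using condition~(a), to a nonzero multiple of $a_{c}k_{0}^{4}(21k_{0}^{2}-5\delta_{c})$, which is exactly the factor appearing in~\eqref{rd} and is nonzero by condition~(b) at $k=2k_{0}$.

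The form of~\eqref{rd} is then dictated by the $O(2)$ symmetry of~\eqref{system}. Translation $x\mapsto x+\alpha$ descends on $Y_{c}$ to $A\mapsto e^{ik_{0}\alpha}A$, and reflection $x\mapsto -x$ swaps $A\leftrightarrow A^{*}$; equivariance of $\mathcal N$ together with the symmetry-preserving projector $\Pi_{c}$ forces the reduced vector field to commute with both actions. The $SO(2)$ action retains only the resonant monomials $A^{j+1}(A^{*})^{j}$, while the reflection combined with the reality constraint compels every such coefficient to be \emph{real}. This is exactly why the bifurcating $S^{1}$-family of waves is stationary rather than rotating: nontrivial rotation would require the cubic and higher coefficients to carry imaginary parts, and symmetry forbids this. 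The linear coefficient $c_{1}(\mu)$ is obtained as the first-order perturbation of the zero eigenvalue of $M_{k_{0}}$, which equals $k_{0}^{4}\mu/[(a_{c}+1)k_{0}^{2}-\delta_{c}]$ precisely for the $\mu$ defined in the theorem; the cubic coefficient arises from $\Pi_{c}\,\mathcal N(A\zeta_{0}+A^{*}\zeta_{0}^{*}+h)$ by combining the direct $\sigma'''(0)\tau^{3}/6$ contribution with the indirect $\sigma''(0)\tau^{2}/2$ contribution fed through the second-order part of $h$, reproducing the numerator and denominator of~\eqref{rd}. The hardest single step, as the abstract already emphasizes, is the projection bookkeeping through the dual pair $(\zeta_{0}^{\dagger},\zeta_{0}^{\dagger *})$, since a naive projection using only $\zeta_{0},\zeta_{0}^{*}$ collapses when $\mathcal L|_{Y_{c}}=0$; once this duality setup is in place, the scalar and real form of all higher coefficients $c_{j}$ in the $O(|A|^{5})$ tail follows inductively from the same $O(2)$ argument, provided $\sigma\in C^{2m+1}$.
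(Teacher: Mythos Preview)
Your outline matches the paper's approach almost step for step: Fourier block-diagonalization, the two-dimensional center space at $k=\pm k_0$, dual eigenvectors for the projector $\Pi_c$, second-order computation of $h$ at the $\pm 2k_0$ modes, and the $O(2)$-equivariance argument forcing real coefficients. Two points, however, need correction.

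First, you invoke ``sectoriality of $\mathcal L$ on $Y_h$'' as a hypothesis for the center manifold theorem. The paper explicitly avoids this: for admissible $(a_c,\delta_c)$ with, say, $a_c<-1$ the operator $\mathcal L(a_c,\delta_c)$ is \emph{not} sectorial, and the existence proof instead rests on the resolvent bound $\|(i\omega-\mathcal L)^{-1}\|_{X\to X}\lesssim |\omega|^{-1}$ for large $|\omega|$, obtained by an energy estimate using $a_c\neq 0$. Your sectoriality route would not cover the full admissible set $\mathcal A(k_0)$.

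Second, the reflection you cite, plain $x\mapsto -x$, is \emph{not} a symmetry of \eqref{system}; the correct involution is $S(\tau(x),u(x))=(\tau(-x),-u(-x))$, with the extra sign on $u$. It is precisely this $S$ that acts on the center coordinates as $A\leftrightarrow A^*$ and, together with $R_\phi$, forces every coefficient $\mathbbm c_j$ to be real. With the wrong reflection the equivariance identities $[S,\mathcal L]=0$, $[S,\mathcal N]=0$ fail, and the key conclusion that the bifurcation is stationary (no rotating waves) would be unjustified. A minor related slip: at quadratic order $h$ carries only modes $\pm 2k_0$, not mode $0$, since the $|A|^2$ constant produced by squaring is killed by the outer $\partial_x$ in $\mathcal N$ (and mode $0$ is excluded from $Y$ anyway).
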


In view of the coefficients $\frac{k_0^4}{(a_c+1)k_0^2-\delta_c}$ and $\frac{1}{(a_c+1)k_0^2-\delta_c}\Big(\frac{\sigma''(0)^2}{6a_c k_0^4(21k_0^2-5\delta_c)}-\frac{\sigma'''(0)}{2}\Big)$ of $\mu A$ and $A^2A^*$ term above
and that the product of them is proportional to $\frac{\sigma''(0)^2}{6a_c k_0^4(21k_0^2-5\delta_c)}-\frac{\sigma'''(0)}{2}$ with positive proportional constant, we have the following bifurcation diagram for the above bifurcation dynamics:

\begin{theorem}\label{1.2}
Let $(a_c,\delta_c)$ be any admissible critical configuration point and $\mu$ be as above and small.
Parameterizing the solution by $\mu$, the system \eqref{system} undergoes a supercritical (resp. subcritical) stationary equivariant $O(2)$-bifurcation around $\mu=0$ when $\frac{1}{(a_c+1)k_0^2-\delta_c}\Big(\frac{\sigma''(0)^2}{6a_c k_0^4(21k_0^2-5\delta_c)}-\frac{\sigma'''(0)}{2}\Big)<0$ (resp., $\frac{1}{(a_c+1)k_0^2-\delta_c}\Big(\frac{\sigma''(0)^2}{6a_c k_0^4(21k_0^2-5\delta_c)}-\frac{\sigma'''(0)}{2}\Big)>0$), more precisely, the following properties hold in a neighborhood of $0$ of $\mathbbm R^1$ for sufficiently small $\mu$ when $(a, \delta)$ varies around $(a_c, \delta_c)$:

(i) If $\frac{\sigma''(0)^2}{6a_c k_0^4(21k_0^2-5\delta_c)}-\frac{\sigma'''(0)}{2}<0$ (resp., $\frac{\sigma''(0)^2}{6a_c k_0^4(21k_0^2-5\delta_c)}-\frac{\sigma'''(0)}{2}>0$), \eqref{system} has precisely one trivial equilibrium $U=0$ for $\mu<0$ (resp., $\mu>0$). This equilibrium is stable when $\frac{1}{(a_c+1)k_0^2-\delta_c}\Big(\frac{\sigma''(0)^2}{6a_c k_0^4(21k_0^2-5\delta_c)}-\frac{\sigma'''(0)}{2}\Big)<0$ and unstable when $\frac{1}{(a_c+1)k_0^2-\delta_c}\Big(\frac{\sigma''(0)^2}{6a_c k_0^4(21k_0^2-5\delta_c)}-\frac{\sigma'''(0)}{2}\Big)>0$,

(ii) If $\frac{\sigma''(0)^2}{6a_c k_0^4(21k_0^2-5\delta_c)}-\frac{\sigma'''(0)}{2}<0$ (resp., $\frac{\sigma''(0)^2}{6a_c k_0^4(21k_0^2-5\delta_c)}-\frac{\sigma'''(0)}{2}>0$), \eqref{system} possesses, for $\mu>0$ (resp., $\mu<0$), the trivial solution $U=0$ and a family of nontrivial equilibrium $U=U^{\theta}_{\epsilon}$  parametrized by $\epsilon=\mu^{1/2}$ and phase constant $\theta\in \mathbb R^1/2\pi\mathbb Z$ and the dependence of $U$ on $\epsilon$ is smooth in a neighborhood of $(0,0)$, and $U_{\epsilon}=O(\epsilon)$, i.e., of magnitude $O(|\mu|^{1/2})$. The bifurcated family of equilibria is stable if $\frac{1}{(a_c+1)k_0^2-\delta_c}\Big(\frac{\sigma''(0)^2}{6a_c k_0^4(21k_0^2-5\delta_c)}-\frac{\sigma'''(0)}{2}\Big)<0$ and unstable if $\frac{1}{(a_c+1)k_0^2-\delta_c}\Big(\frac{\sigma''(0)^2}{6a_c k_0^4(21k_0^2-5\delta_c)}-\frac{\sigma'''(0)}{2}\Big)>0$.
\end{theorem}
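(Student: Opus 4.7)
The plan is to read off the bifurcation diagram directly from the reduced amplitude equation \eqref{rd} supplied by Theorem~\ref{thm}, exploiting its $S^1$-invariance. Write $\alpha:=\frac{k_0^4}{(a_c+1)k_0^2-\delta_c}$ and $\beta:=\frac{1}{(a_c+1)k_0^2-\delta_c}\bigl(\frac{\sigma''(0)^2}{6a_c k_0^4(21k_0^2-5\delta_c)}-\frac{\sigma'''(0)}{2}\bigr)$, and set $B:=\frac{\sigma''(0)^2}{6a_c k_0^4(21k_0^2-5\delta_c)}-\frac{\sigma'''(0)}{2}$, $D:=(a_c+1)k_0^2-\delta_c$, so that $\alpha\beta = k_0^4 B/D^2$ carries the sign of $B$ and $\beta$ carries the sign of $B/D$. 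Every nonlinear term in \eqref{rd} is an $S^1$-equivariant monomial of the form $|A|^{2j}A$; passing to polar coordinates $A=re^{i\theta}$ therefore decouples the reduced flow into
$$\dot r = \alpha\mu\,r + \beta r^3 + O(r^5),\qquad \dot\theta=0.$$

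The trivial equilibrium $r=0$ exists for every $\mu$. For $r>0$, I divide the radial equation by $r$ and set $s=r^2$ to obtain $g(s,\mu):=\alpha\mu+\beta s+O(s^2)=0$. By the non-degeneracy $\beta\neq 0$ built into the statement, $\partial_s g(0,0)=\beta\neq 0$, so the implicit function theorem furnishes a unique smooth branch $s(\mu)=-\alpha\mu/\beta+O(\mu^2)$. A nontrivial equilibrium circle on the center manifold exists precisely when $s(\mu)>0$, i.e.\ when $\alpha\beta\mu<0$, equivalently $B\mu<0$; this reproduces the sign conditions in part~(ii) ($\mu>0$ for $B<0$, $\mu<0$ for $B>0$). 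Setting $\epsilon:=|\mu|^{1/2}$, the amplitude $r^\ast(\mu)=O(\epsilon)$ is smooth in $\epsilon$, and feeding $A=r^\ast(\mu)e^{i\theta}$ through the center-manifold reduction map yields the full $S^1$-family $U^\theta_\epsilon=O(\epsilon)$ of equilibria of \eqref{system}, with $\theta\in\mathbb R^1/2\pi\mathbb Z$ free because $\dot\theta=0$.

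Stability on the center manifold transfers to stability for \eqref{system} by the reduction principle associated with the center-manifold theorem. At $A=0$ the Jacobian of the reduced field, viewed as a real endomorphism of $\mathbb{C}\cong\mathbb{R}^2$, equals $\alpha\mu\cdot I$, so the trivial state is linearly stable iff $\alpha\mu<0$; since $\mathrm{sign}(\alpha\mu)\cdot\mathrm{sign}(\beta)=\mathrm{sign}(\alpha\beta\mu)=\mathrm{sign}(B\mu)>0$ throughout part~(i), $\alpha\mu$ and $\beta$ share a sign there, so stability of the trivial state is equivalent to $\beta<0$, exactly as claimed. At a nontrivial equilibrium $A^\ast=r^\ast e^{i\theta}$ the radial derivative is $\partial_r\dot r\big|_{r^\ast}=2\beta(r^\ast)^2+O((r^\ast)^4)$, which carries the sign of $\beta$ for small $\mu$, while the tangential eigenvalue vanishes identically because of the $O(2)$-equivariance; hence the bifurcated circle is orbitally stable iff $\beta<0$ and unstable iff $\beta>0$, matching part~(ii).

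The main obstacle I anticipate is the faithful transfer of this reduced picture back to \eqref{system} in the Hilbert space~$Y$: in particular, (a) verifying that the smooth radial branch $r^\ast(\mu)$ lifts, via the center-manifold reduction map, to smooth equilibria $U^\theta_\epsilon$ of the PDE with the asserted leading magnitude $O(|\mu|^{1/2})$ in~$Y$, and (b) justifying that the zero tangential eigenvalue on the center subspace is correctly interpreted as a group-orbit degeneracy forced by $O(2)$-equivariance rather than a genuine neutral mode, so that the reduction principle genuinely yields orbital stability of the entire $S^1$-family $\{U^\theta_\epsilon\}_\theta$ in $Y$. Once these two transfers are secured, the remaining work is elementary sign bookkeeping among $B$, $D$, and $\mu$.
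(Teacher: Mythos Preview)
Your proposal is correct and follows essentially the same route as the paper: pass to polar coordinates in the reduced amplitude equation of Theorem~\ref{thm}, use that the coefficients are real so that $\dot\theta\equiv 0$, and analyze the scalar radial equation $\dot r=\mathbbm a\mu r+\mathbbm b r^3+O(r^5)$ via the implicit function theorem. The only cosmetic difference is that the paper applies the implicit function theorem to solve $\mu$ as a function of $r^2$ (using $\partial_\mu h=\mathbbm a\neq 0$, which holds automatically by condition~(c)), whereas you solve $s=r^2$ as a function of $\mu$ (using $\partial_s g=\beta\neq 0$, the nondegeneracy built into the theorem); the two are equivalent here, and your explicit Jacobian computation for stability is slightly more detailed than what the paper writes out.
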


The quotient of the coefficient of $\mu A$ term and that of $A^2A^*$ term is given by
$k^4_0 \Big(\frac{\sigma''(0)^2}{6a_c k_0^4(21k_0^2-5\delta_c)}-\frac{\sigma'''(0)}{2}\Big)^{-1}$.
Therefore, each bifurcated equilibrium $U^{\theta}_{\epsilon}$ above corresponds to an bifurcated oscillation wave of \eqref{system} given as
\begin{equation}
k_0^2\epsilon\sqrt{\Big(\frac{\sigma''(0)^2}{6a_c k_0^4(21k_0^2-5\delta_c)}-\frac{\sigma'''(0)}{2}\Big)^{-1}}(e^{i\theta}\xi+e^{-i\theta}\xi^*)+O(|\epsilon|^{3}),
\end{equation}
or more precisely,
\begin{equation}
k_0^2\epsilon\sqrt{\Big(\frac{\sigma''(0)^2}{6a_c k_0^4(21k_0^2-5\delta_c)}-\frac{\sigma'''(0)}{2}\Big)^{-1}}\Big(e^{i(\theta+k_0)x}\begin{pmatrix}1\\-ia_c k_0^3\end{pmatrix}+e^{-i(\theta+k_0)x}\begin{pmatrix}1\\ia_c k_0^3\end{pmatrix}\Big)+O(|\epsilon|^{3}).
\end{equation}
The family of bifurcated oscillation waves is homeomorphic to $S^1$ due to the range of the phase constant $\theta$ and forms a closed orbit around the trivial equilibrium. In other words, there are a family of bifurcated limit cycles, denoted by $C_{a,\delta}$, with amplitude
$O(|\mu|^{1/2})=O(|(\delta_c-k_0^2)(a-a_c)+a_c(\delta-\delta_c)|^{1/2})$ in the bifurcation range for \eqref{system} when the parameters $a$ and $\delta$ varies around any admissible critical configuration point. Further, these bifurcated limit cycles form a bell-shape invariant limit set $I_{a,\delta}$ for \eqref{system} when $\mu$ varies. The stability (or instability) and absorbing (or repelling) properties of the limit cycles and the formed invariant limit set are completely determined by $a_c, \delta_c, \sigma$ and characterized by the sign of $\frac{1}{(a_c+1)k_0^2-\delta_c}\Big(\frac{\sigma''(0)^2}{6a_c k_0^4(21k_0^2-5\delta_c)}-\frac{\sigma'''(0)}{2}\Big)$ through Theorem \ref{1.2}-(ii).

We refer the readers to Section \ref{analysis} and Section \ref{last} for more precise results, explanations and related issues. Here we do not seek to list all the results and subtle points in this introductory section as some of them should be demonstrated in the process of computations. However, we shall remark that the oscillations in the current dynamics driven by the system of partial differential equation \eqref{system} is not due to the bifurcation of rotation waves though the system exhibits Euclidean symmetry-the $O(2)$ symmetry. Another remark is that the wave number $k_0$ enters the dynamics of \eqref{system} on the center manifold(s) through $k_0^2$. This is natural due to symmetry of the system and in particular implies that the dynamics of the system \eqref{system} are exactly same for $\pm k_0$. Furthermore, as the sign of $k_0^2$ is always the same for any nonzero wave number $k_0$, we could conclude by Theorem \ref{1.2} that the bifurcation dynamics of \eqref{system}
under the spectral scenario in the current paper are topologically equivalent in the sense of continuous dynamical systems for any nonzero wave number $k_0$. 

In our current study, we get a family of closed orbits or limit cycles $C_{a,\delta}$ formed by stationary oscillation waves with wave number based on any nonzero integer $k_0$. This mechanism of getting oscillation solutions in our current work is of course different from the well-known Hopf bifurcation mechanism as the spectrum here passes the imaginary axis of the complex plane through the real axis. It is also well-known that equivariant $O(2)$-Hopf bifurcation can support invariant torus which is formed by oscillation waves, see in particular \cite{Yao, LY}. In our study here, we also get an invariant set which is of bell-shape. However, this invariant bell consists of stationary oscillation waves.

We shall also emphasize that we get the dynamics on the center manifold to arbitrary orders allowed by the smoothness of the flux functions. Hence we get, by using symmetry, that the full dynamics on the center manifold in the sense of the unique finite order approximation property of the center manifold reduction function(s). From the dynamical behaviors of the system on center manifold, we could observe that the third order term in the flux also plays an important role for the current study. This is in sharp contrast to our former study on the equivariant Hopf bifurcation case in which the third order term only enters the angular equations hence does not influence the stability. Besides, the computations in the current study are different from our former methods in which normal form theory are involved.  As $\sigma''$ is not involved in the definition of an admissible critical configuration point, a direct consequence of the above observation is that genuine nonlinearity of the corresponding first order system in \eqref{system} is not needed at all to support the current dynamics. More precisely, system \eqref{system} still supports the current stationary equivariant bifurcation dynamics even if $\sigma''(0)=0$. Of course, we need $\sigma'''(0)\not=0$ to avoid degeneracy. As we can also allow $\sigma'(0)=0$ (see Defintion \ref{def} or
Section 3 below), it is fair to see that the current bifurcation mechanism is not driven by the first order hyperbolic system but by the interplay between the diffusions and nonlinearity in the flux function. This is another difference of the current result with our former results on equivariant Hopf bifurcation for the same system \eqref{system}.  Surprisingly, $\sigma''(0)$ also enters the bifurcation dynamics on the center manifold(s) through square. In the setting of hyperbolic conservation laws, we know that the convex flux and concave flux induce different behaviors of wave phenomena and their stability property. However, here the sign of $\sigma'(0)$ if it is non-vanishing (or equivalently the local convexity or concavity of the flux function around the reference uniform solution), is not essential.

\subsection{Discussion and future work} The current work and the former work of the third author \cite{Yao} aim to
study the equivariant dynamics driven by partial differential equations which exhibit symmetry from dynamical system point of view and using techniques in partial differential equations. As most physical systems exhibit symmetry, we shall further explore the role of the symmetry in the study of dynamics of partial differential equations. We shall also study similar models in higher spatial dimensions, especially the most interesting two and three dimensional ones. A direct obstruction for carrying out similar programs is that the dimension of the center spaces are much larger due to higher spatial dimensions and symmetry.

In \cite{PYZ}, one of us with his coauthors studied the $O(2)$-Hopf bifurcation of viscous shock waves in a channel for some physical systems such as 
compressible Navier-Stokes equations, magnetohydrodynamics (MHD) and viscoelasticity models. A natural next step would be the study of the above systems (Navier-Stokes, MHD, viscoelasticity) and related models in both the two dimensional duct geometry and the most interesting three dimensional geometries under the same spectral scenario as in the current work, or even other spectral scenarios. Our current study sheds light on these further steps since a big advantage of our current study is that everything can be calculated explicitly through Fourier series.

After the completion of the current work, the second and third authors found another elegant way to get the current equivariant bifurcation dynamics through Lyapunov-Schmidt reduction and normal form arguments. And actually, the ``spectum+normal form+dynamical decomposition" argument can apply to various dynamical systems driven by partial differential equations with symmetries.  Related work will be reported elsewhere.


We refer the reader to the works of Bressan \cite{Br1, Br2}, Carr \cite{Ca}, Chicone \cite{Ch}, Golubitsky-Stewart-Schaeffer \cite{GSS}, Haragus and Iooss \cite{HI}, Henry \cite{He}, Iooss and Adelmeyer \cite{IA}, Jia and Sverak \cite{JS1, JS2}, Ma and Wang \cite{MW1, MW2}, Nakanishi and Schlag \cite{NS} and the references therein for similar and related studies in a variety of settings.

\textbf{Convention.} We will use ``$*$" to denote
``complex conjugate", i.e., for $z\in \mathbb{C}$, $z^*$ means the
complex conjugate of $z$; ``$\int f \, dx$" means
``$\int_{-\pi}^{\pi}f\,dx$" for the integrand $f$; for two
nonnegative quantities, ``$A\lesssim B$" means ``$A\leq CB$" for
some constant $C>0$. For $U=\begin{pmatrix}u_1\\u_2\end{pmatrix},
V=\begin{pmatrix}v_1\\v_2 \end{pmatrix}\in \mathbb{C}^2$, ``$\langle
U, V \rangle" \,\mbox{means}\, ``\int_{-\pi}^{\pi} u_1 v_1^* + u_2 v_2^*\,dx$".
We also adopt the standard big $O$ ``$\mathcal{O}$" and small o ``$o$"
notations for limiting processes. For a vector $V$, we use $v_j$ or $v^{(j)}$ to represent its components.
We use ``$[\cdot\,\, , \cdot ]$" to denote commutator: $[F, G]=FG-GF$ for F, G being functions, symbols or operators.
For a linear operator $\mathcal{L}: \mathcal X\mapsto \mathcal X$ on some Banach space $\mathcal X$, we use $\sigma(\mathcal{L})$, $\rho(\mathcal{L})$ to denote
its spectrum set and resolvent set. Further, $\sigma(\mathcal{L})=\sigma_c(\mathcal{L})\cup\sigma_s(\mathcal{L})\cup\sigma_u(\mathcal{L})$, i.e.,
$\sigma(\mathcal{L})$ is the union of the center spectrum set $\sigma_c(\mathcal{L})$, the stable spectrum set $\sigma_s(\mathcal{L})$ and the unstable
spectrum set $\sigma_u(\mathcal{L})$. The associated space decomposition is $\mathcal X=\mathcal X_c\cup \mathcal X_s \cup \mathcal X_u$ where
$\mathcal X_c:=\Big(\frac{1}{2\pi i}\oint_{\gamma_c}(\lambda-\mathcal{L})^{-1}\,d\lambda\Big) \mathcal X$ is the center space, 
$\mathcal X_s:=\Big(\frac{1}{2\pi i}\oint_{\gamma_s}(\lambda-\mathcal{L})^{-1}\,d\lambda\Big) \mathcal X$ is the stable space, 
$\mathcal X_u:=\Big(\frac{1}{2\pi i}\oint_{\gamma_u}(\lambda-\mathcal{L})^{-1}\,d\lambda\Big) \mathcal X$ is the unstable space;
The hyperbolic space is given $\mathcal X_h:=\mathcal X_s \cup \mathcal X_u$. The $\gamma_j$ above is any closed simple curve in the complex plane containing only spectrum of type $j=c, s, u$. 

\section{Functional Analytic Setting}

\setcounter{equation}{0}
\setcounter{theorem}{0}

Now we start our analysis by settling down the functional analytic framework. 

First, we write the system \eqref{system} in the form of nonlinear perturbation system. For this purpose, we need to regard $(\tau, u)$ as 
the perturbation variable around the state $(0, 0)$ and write \eqref{system} as the following system
\begin{equation}\label{perturb}
\begin{cases}
\partial_t \tau-\partial_x u=-a\partial^4_x\tau\\
\partial_t u-\sigma'(0)\partial_x\tau=-\delta\partial^2_x u-\partial^4_x u
+\partial_x\Big( \frac{\sigma''(0)}{2}\tau^2 +  \frac{\sigma'''(0)}{6}\tau^3 +\Gamma(\tau)\Big)
\end{cases}
\end{equation}
where $\Gamma(\tau):=\sigma(\tau)-\sigma(0)- \frac{\sigma''(0)}{2}\tau^2 - \frac{\sigma'''(0)}{6}\tau^3$ and $\Gamma(\tau)=\mathcal O(|\tau|^4)$ when $|\tau|$ is small. 
We shall keep in mind that the system \eqref{perturb} is equivalent to the system \eqref{system}.

Second, we write the nonlinear perturbation system \eqref{perturb} into operator equation form.  For this purpose, we denote
$U=\begin{pmatrix} \tau \\ u\end{pmatrix}$ and $\mathcal L, \mathcal N$ as follows: 
\begin{equation}
\mathcal{L}:=\begin{pmatrix}-a\partial_x^4 & \partial_x \\
\sigma'(0)\partial_x & -\delta\partial_x^2-\partial^4_x
\end{pmatrix}
\end{equation} 
and 
\begin{equation}
\mathcal{N}\begin{pmatrix} \tau \\ u
\end{pmatrix}:=\begin{pmatrix} 0\\ \partial_x\Big( \frac{\sigma''(0)}{2}\tau^2 +  \frac{\sigma'''(0)}{6}\tau^3 +\Gamma(\tau)\Big)
\end{pmatrix}.
\end{equation}
With the above notations, the nonlinear perturbation system can be written symbolically as
\begin{equation}
\partial_t U=\mathcal{L}U+ \mathcal{N}(U).
\end{equation}

In order to emphasize the linear operator and nonlinear term, we may also write the system \eqref{system} as

\begin{equation}
\partial_t\begin{pmatrix} \tau \\ u
\end{pmatrix}=\begin{pmatrix}-a\partial_x^4 & \partial_x \\
\sigma'(0)\partial_x & -\delta\partial_x^2-\partial^4_x
\end{pmatrix}\begin{pmatrix} \tau \\ u
\end{pmatrix}+\mathcal{N}\begin{pmatrix} \tau \\ u
\end{pmatrix}.
\end{equation}

Third, we need to decide the space triplet under which we work and achieve our specific goals. The choice of working space triplet is nontrivial (See also Section \ref{last}). Here $\mathcal L$ is a fourth order linear differential operator on the periodic domain $\mathbb{R}^1/[-\pi, \pi]$. We may consider it as a linear operator on the space $L^2_{per}(-\pi, \pi)$ with domain $H^4_{per}(-\pi, \pi)$. Meanwhile, if we seek solutions  $(\tau, u)$ on the periodic 
Sobolev spaces, the quantities $\int \tau\,dx$ and $\int u\,dx$ are conserved due to the conservative form of the original system \eqref{system} as follows:
\begin{equation}
\partial_t\begin{pmatrix} \tau \\ u
\end{pmatrix}=\partial_x\begin{pmatrix}u-a\partial_x^3  \\
\sigma(\tau)-\delta \partial_x u-\partial_x^3 u
\end{pmatrix}.
\end{equation}

Taking into account of the above considerations and the requirement of center manifold theory, we will first  (but also see Section \ref{last}) work on the space triplet $Z\subset Y\subset X$ given below:
$$X:=\{U\in L^2_{per}(-\pi, \pi); \int_{-\pi}^{\pi}U\,dx=0\},$$
$$Y=X=\{U\in L^2_{per}(-\pi, \pi); \int_{-\pi}^{\pi}U\,dx=0\},$$
$$Z:=\{U\in H^4_{per}(-\pi, \pi); \int_{-\pi}^{\pi}U\,dx=0\}.$$
It is important to notice that the mean zero restriction which comes naturally from the conservative structure of system \eqref{system} also has influence on the spectra of the linear operator $\mathcal L(a_c,\delta_c)$.

\section{Spectral Analysis I}

\setcounter{equation}{0}
\setcounter{theorem}{0}

With the functional analytic preparations above, we are now ready to study the spectra of the linear operator $\mathcal L$ and give explanations of the admissible configuration set associated with an arbitrarily fixed positive interger $k_0$. Based on our choice of space triplet, we shall regard
\begin{equation}\mathcal{L}=\mathcal L(a, \delta):=\begin{pmatrix}-a\partial_x^4 & \partial_x \\
\sigma'(0)\partial_x & -\delta\partial_x^2-\partial^4_x
\end{pmatrix}
\end{equation} 
as a linear operator on the space $X$ with domain $Z$ to study its spectra. To this end, we can proceed by Fourier analysis as we are working on periodic domains.
After Fourier transformation, the differential operator is represented by
$$M_k=\begin{pmatrix}-ak^4 & ik \\
\sigma'(0)k i & \delta k^2-k^4
\end{pmatrix},\,\, k\in\mathbb{Z}, \, k\not=0.$$
Therefore, we have $$\sigma(\mathcal{L})=\cup_{k\in\mathbb{Z}, \, k\not=0}\sigma(M_k).$$
The mode $k=0$ is not included in the above union because we have mean 0 restriction in the definition of $X$. The eigenvalues $\lambda$ of $M_k$ for $k\not =0$ are given by 
\begin{align*}
\det (\lambda-M_k)&=\det \begin{pmatrix}\lambda+a k^4 &- ik \\
-\sigma'(0)k i & \lambda- \delta k^2+k^4
\end{pmatrix}\\
& =(\lambda+a k^4)(\lambda-\delta k^2+k^4)+\sigma'(0)k^2\\
&=\lambda^2+\Big( (a+1)k^4-\delta k^2 \Big)\lambda+ a k^4(k^4-\delta k^2)+\sigma'(0)k^2\\
&=0.
\end{align*}

Before further analysis, we first notice that in the formula for the eigenvalues of $M_k$, i.e.,
\begin{equation}\label{eigen}
\lambda^2+\Big( (a+1)k^4-\delta k^2 \Big)\lambda+ a k^4(k^4-\delta k^2)+\sigma'(0)k^2=0,
\end{equation}
$k$ enters the equation through $k^2$, which in particular implies $\sigma(M_k)=\sigma(M_{-k})$. There is no surprise here as
this is a direct consequence of the $O(2)$-symmetry exhibited by the system \eqref{system} (see Section \ref{last}).

Now we explain the admissible critical  configuration sets $\mathcal A(k_0)$ associated with a nonzero (positive) integer $k_0$ in this paper. As we have studied the equivariant Hopf bifurcation spectral scenario in \cite{Yao, LY}, we study here the spectral scenario that there is one and only one spectrum curve crosses the imaginary axis in $\mathbbm C^1$ through the origin with the purpose of covering the generic cases.

Now let us fix the nonzero (positive) integer $k_0$. To achieve the above spectrum crossing scenario based on the wave number $k_0$, necessarily we need that $M_{k_0}$ contributes zero spectrum for the
linear operator $\mathcal L$ while all the other $M_k$ for $|k|\not= k_0$ do not. In other words, we require the following necessary conditions $(a)$ and $(b)$  in  \eqref{eigen}:

$(a)$ \textit{for the nonzero integer $k_0$,
$$ a_c k_0^4(k_0^2-\delta_c )+\sigma'(0)=0.$$}

$(b)$ \textit{for any nonzero integer $k$ such that $ |k|\neq |k_0|$, and $\omega\in \mathbbm R^1-\{0\}$,
$$[a_c k^4(k^4-\delta_c k^2)+\sigma'(0)-\omega^2]+i[(a_c+1)k^4-\delta_c k^2]\neq 0.$$}

The above conditions $(a)$ and $(b)$ give the necessary conditions for the spectral crossing scenario. We write condition $(b)$ in the above form for the ease of verifying consistency. As we may expect or guess, it is not sufficient for our bifurcation analysis as we need the crossing to be transverse in order to complete the bifurcation analysis. It turns out that conditions $(a)$, $(b)$ together with the following nondegeneracy condition $(c)$ are sufficient for our purpose:  

$(c)$ \textit{for the above $k_0$, $(a_c+1)k_0^2-\delta_c\neq 0$}.

The reasons that we call condition $(c)$ a nondegeneracy condition lie in the following two observations: (1) the condition $(a_c+1)k_0^2-\delta_c\neq 0$ means that $M_{k_0}$ does not contribute repeated zero spectrum for $\mathcal L$ as $(a+1)k^4-\delta k^2=((a+1)k^2-\delta)k^2$ is the coefficient of the first order term in the spectral variable $\lambda$ in \eqref{eigen}; (2) when we compute the dual kernel of the linear operator $\mathcal L(a_c,\delta_c)$ during the reduction procedure, $(a_c+1)k_0^2-\delta_c$ appears in the denominator, see Remark \ref{rc}.

We may notice that the expression $a_c (2k_0)^4((2k_0)^2-\delta_c )+\sigma'(0)$ also appears as a denominator during the reduction procedure, see in particular \eqref{v_1}. However, we do not need to worry about if $a_c (2k_0)^4((2k_0)^2-\delta_c )+\sigma'(0)$ is nonzero or not. Actually, under conditions $(a)$, $(b)$ and $(c)$, the expression $a_c (2k_0)^4((2k_0)^2-\delta_c )+\sigma'(0)$ is automatically nonzero, which is a direct consequence of the paragraph above \eqref{v_1} (see also Remark \ref{r6}). Therefore, in the definition of $\mathcal A(k_0)$, the following condition $(d)$ is implicitly included:

$(d)$ \textit{$a_c (2k_0)^4((2k_0)^2-\delta_c )+\sigma'(0)\neq0$}.\\

Before we do further spectrum analysis, we need first verify that there exists flux function $\sigma(\tau)$ such that $\mathcal A(k_0)$ is nonempty for some nonzero (positive) integer $k_0$, i.e., $(a)$ $(b)$ and $(c)$ are consistent for some $k_0$ as above, which is enough for us to carry out the remaining parts of the program. Actually, we can easily show that there exist flux functions $\sigma(\tau)$ such that $\mathcal A(k_0)$ is nonempty for any nonzero (positive) integer $k_0$. For this purpose, we claim:\\

$(Consistency)$ \textit{conditions $(a)-(d)$ are consistent} in the sense above.\\

This is easily seen by considering the special case $\sigma'(0)=0$. Then $(a)$ and $(d)$ require that $a_c\neq 0$ and $\delta_c=k_0^2$.
As long as $a_c\neq 0$ and $\delta_c=k_0^2$, $(a)$, $(c)$ and $(d)$ are satisfied. We can then adjust $a_c>0$ to satisfy $(b)$, which is easy. In particular if $k_0=\pm 1$, $(b)$ is satisfied since $(a_c+1)k_0^4-\delta_c k_0^2=a_c\neq 0$. Notice that we have no requirement on the sign of $a_c$ here. Alternatively, we can let $a_c<-1$ to satisfy $(b)$. In particular, the operator $\mathcal L(a_c,\delta_c)$ is not a sectoral operator if $a_c<-1$. 

Another easy way to see the consistency is to choose $\delta_c=0$, $a_c\neq0, -1$ and $a_ck_0^6+\sigma'(0)=0$.

Next we give several remarks based on the above analysis and the whole paper.

\begin{remark}
From the analysis in the above two paragraphs, we see that $\mathcal A(k_0)$ is generically nonempty for any nonzero integer $k_0$ and the conditions $(a)$, $(b)$ and $(c)$ are actually rather mild though we may regard on first seeing that they impose strong constraints on the flux functions or the parameters. 
\end{remark}

\begin{remark}
Without condition $(c)$, we can still show the existence of center manifold reduction but we have some problems in computation as this appears as a denominator.
\end{remark}

\begin{remark}\label{r3}
As long as $a_c\neq 0$, the resolvent estimate in Lemma \ref{plemma} is valid, which will be sufficient to guarantee the existence of center manifold. $a_c\neq0$ as long as $(a_c, \delta_c)\in \mathcal A(k_0)$, which is easily seen from conditions $(a)$ and $(d)$.
\end{remark}

\section{Spectrum Analysis II}\label{spectrum2}

\setcounter{equation}{0}
\setcounter{theorem}{0}

From now on, we let $(a_c, \delta_c)$ be an admissible critical configuration point. Due to the spectral preparations in the last section, we are now on a sound foundation to do the bifurcation analysis. In this section, we will introduce the bifurcation system and make some spectral preparations.

First, we introduce several notations:
\begin{equation}
\nu=(\nu_1, \nu_2):=(a-a_c, \delta-\delta_c),\quad \mu=\mu(\nu):=(\delta_c-k_0^2)\nu_1+a_c\nu_2.
\end{equation}
Technically, we can do the bifurcation analysis for the following three cases: (1) $\delta$ varies around $\delta_c$ with $a=a_c$ being fixed; (2) $a$ varies around $a_c$ with $\delta=\delta_c$ being fixed; (3) $(a, \delta)$ varies around $(a_c, \delta_c)$. Formally, case (3) covers cases (1) and (2) and we will first do the analysis based on case (3). However, there are points to be remarked about cases (1) and (2) from both the mathematical and applicational points of view, see Section \ref{last}.

Second, we isolate the bifurcation parameter $\mu$ to get the bifurcation system. For this purpose, we write system \eqref{perturb} in the following form
\begin{align}\label{bs}
\partial_t U=\mathcal L(a_c,\delta_c)U+(\mathcal L(a, \delta)-\mathcal L(a_c,\delta_c))U+\mathcal N (U),
\end{align}
where we have the obvious identifications:
\begin{equation}
\mathcal L(a_c,\delta_c)=\begin{pmatrix}-a_c\partial_x^4 & \partial_x \\
\sigma'(0)\partial_x & -\delta_c\partial_x^2-\partial_x^4
\end{pmatrix},
\end{equation} 

\begin{equation}
\mathcal L(a, \delta)-\mathcal L(a_c,\delta_c)=\begin{pmatrix}-\nu_1\partial_x^4 & 0 \\
0 & -\nu_2\partial_x^2
\end{pmatrix}
\end{equation}
and 
\begin{equation}
\mathcal N (U)= \begin{pmatrix} 0 \\ \partial_x\Big( \frac{\sigma''(0)}{2}\tau^2 +  \frac{\sigma'''(0)}{6}\tau^3 +\Gamma(\tau)\Big)
\end{pmatrix}
\end{equation}

Now we introduce $R(U,\nu):= R_{11}(U,\nu)+R_{20}(U, U)+R_{30}(U, U, U)+\tilde R (U)$ where
$$R_{11}(U,\nu):=\begin{pmatrix}-\nu_1\partial_x^4  U^{(1)}\\ -\nu_2\partial_x^2 U^{(2)}\end{pmatrix}\,\,,R_{20}(U, V):=\begin{pmatrix}0 \\ \frac{\sigma''(0)}{2}\partial_x(U^{(1)}V^{(1)})
\end{pmatrix},$$ 
$$R_{30}(U, V, W):=\begin{pmatrix}0 \\ \frac{\sigma'''(0)}{6}\partial_x(U^{(1)}V^{(1)}W^{(1)})
\end{pmatrix},\,\, \tilde{R}(U):=\begin{pmatrix}0 \\ \partial_x \Gamma(U^{(1)})
\end{pmatrix},$$
and write the nonlinear perturbation system \eqref{bs} around $\mu=0$ as
\begin{equation}\label{bifurcation equation}
\partial_t U=\mathcal L(a_c,\delta_c)U + R(U, \nu).
\end{equation}
The above equation \eqref{bifurcation equation} is the \textit{bifurcation equation or system}.

Finally, we make some spectral preparations for the use of center manifold theory.

\begin{lemma}\label{cp}
$\sigma_c(\mathcal L(a_c, \delta_c))=\{0\}$.\end{lemma}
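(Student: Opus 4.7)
The plan is to use the Fourier decomposition of $\mathcal L(a_c,\delta_c)$ established at the start of Section 3. Since the operator acts diagonally on Fourier modes of $X$, its spectrum satisfies $\sigma(\mathcal L(a_c,\delta_c)) = \bigcup_{k \in \mathbb Z,\, k \neq 0} \sigma(M_k)$, and hence the center spectrum is
$$\sigma_c(\mathcal L(a_c,\delta_c)) = \bigcup_{k \neq 0} \big(\sigma(M_k) \cap i\mathbb R\big).$$
I would split the analysis into the cases $k = \pm k_0$ and $|k| \neq |k_0|$, aiming to show that the former contributes exactly $\{0\}$ and the latter contributes nothing.

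For $k = \pm k_0$, substituting $k_0$ into the characteristic polynomial \eqref{eigen} gives a constant term
$a_c k_0^4(k_0^4 - \delta_c k_0^2) + \sigma'(0)k_0^2 = k_0^2[a_c k_0^4(k_0^2 - \delta_c) + \sigma'(0)]$,
which vanishes by condition (a). Hence $\lambda = 0$ is an eigenvalue of $M_{k_0}$. By Vieta's formulas applied to the monic quadratic \eqref{eigen} the second eigenvalue equals $-k_0^2[(a_c+1)k_0^2 - \delta_c]$, which is a nonzero real number by condition (c) and therefore is not on the imaginary axis. The identity $\sigma(M_k) = \sigma(M_{-k})$ noted right after \eqref{eigen} takes care of $k = -k_0$. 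Thus $\sigma(M_{\pm k_0}) \cap i\mathbb R = \{0\}$.

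For $|k| \neq |k_0|$ with $k \neq 0$, I would test whether some $\lambda = i\omega$ with $\omega \in \mathbb R$ can solve \eqref{eigen}. Plugging in and separating real and imaginary parts yields exactly the expression displayed in condition (b) (up to the consistent multiplicative rescalings by $k^2$ and by $\omega$), which is assumed nonzero for every nonzero real $\omega$. The remaining borderline case $\omega = 0$ reduces to asking whether $a_c k^4(k^2 - \delta_c) + \sigma'(0)$ could vanish for some $|k|\neq |k_0|$; the admissibility scenario is precisely that $k_0$ is the \emph{unique} mode realizing condition (a), so this is excluded (and may be viewed as the $\omega \to 0$ limiting form of (b)). Consequently $\sigma(M_k) \cap i\mathbb R = \emptyset$ for every $|k|\neq |k_0|$, and taking the union gives $\sigma_c(\mathcal L(a_c,\delta_c)) = \{0\}$. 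No serious obstacle is anticipated; the only subtlety worth flagging is the $\omega = 0$ case just discussed, which is not literally contained in condition (b) as written but follows immediately from the intended spectral crossing scenario described in Section 3.
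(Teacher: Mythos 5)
Your proof is correct and is exactly the mode-by-mode verification that the paper compresses into the one-line assertion that the lemma is ``a direct consequence of the definition of $\mathcal A(k_0)$'': condition (a) puts $0\in\sigma(M_{\pm k_0})$, Vieta plus condition (c) shows the second eigenvalue of $M_{\pm k_0}$ is real and nonzero, and condition (b) — read, as you do, as the statement that \eqref{eigen} has no root $i\omega$ with $\omega\neq 0$ when $|k|\neq|k_0|$ (the mismatch between $\sigma'(0)$ and $\sigma'(0)k^2$ there is evidently a misprint in the paper, not a gap in your argument) — handles all remaining modes. The $\omega=0$ borderline you flag is in fact immaterial for the set equality $\sigma_c(\mathcal L(a_c,\delta_c))=\{0\}$, since another mode contributing a zero eigenvalue would not add a new point to the center spectrum but only enlarge the center space; your appeal to the uniqueness of $k_0$ is therefore needed only for the later two-dimensionality of $Z_c$, not for this lemma.
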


\begin{proof}
This is a direct consequence of the definition of the admissible critical configuration set $\mathcal A(k_0)$.
\end{proof}

Concerning the spectrum of $\sigma(\mathcal L(a_c,\delta_c))$, we also have the following lemma:

\begin{lemma}\label{gap}
There exists a positive constant $\gamma>0$, such that $\sup\{Re\, \lambda; \lambda\in \sigma_s(\mathcal L(a_c,\delta_c))\}<-\gamma$ and  $\inf\{Re\, \lambda; \lambda\in \sigma_u(\mathcal L(a_c,\delta_c))\}>\gamma$.
\end{lemma}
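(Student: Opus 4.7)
The plan is to use the Fourier decomposition $\sigma(\mathcal L(a_c,\delta_c))=\bigcup_{k\in\mathbb Z\setminus\{0\}}\sigma(M_k)$ from the previous section together with Lemma \ref{cp}, and establish a uniform separation of every non-center eigenvalue from the imaginary axis via a ``low modes/high modes'' dichotomy. By Lemma \ref{cp}, no non-zero eigenvalue of any $M_k$ is purely imaginary, so every $\lambda\in\sigma_s(\mathcal L(a_c,\delta_c))\cup\sigma_u(\mathcal L(a_c,\delta_c))$ already satisfies $\mathrm{Re}\,\lambda\neq 0$; the remaining task is to upgrade this to a positive lower bound on $|\mathrm{Re}\,\lambda|$ uniform in $k$.

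First I would handle the low modes. For $k=\pm k_0$, condition (a) places one eigenvalue of $M_{\pm k_0}$ at $0$, and the trace relation $\mathrm{tr}(M_{k_0})=\delta_c k_0^2-(a_c+1)k_0^4$ pins the other one to $-((a_c+1)k_0^2-\delta_c)k_0^2$, a non-zero real number by condition (c), of fixed magnitude $\eta_0:=|(a_c+1)k_0^2-\delta_c|k_0^2$. I would then choose a large integer $K_0>|k_0|$. Across the remaining bounded range $0<|k|\le K_0$, $|k|\ne |k_0|$, only finitely many eigenvalues of the $M_k$ are involved, each with non-zero real part by Lemma \ref{cp}, so their minimum absolute real part
\begin{equation*}
\gamma_1:=\min_{0<|k|\le K_0,\,|k|\ne|k_0|}\;\min_{\lambda\in\sigma(M_k)}|\mathrm{Re}\,\lambda|
\end{equation*}
is strictly positive.

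For the tail $|k|>K_0$ I would apply the quadratic formula to the characteristic polynomial $\lambda^2+((a_c+1)k^4-\delta_c k^2)\lambda+a_c k^4(k^4-\delta_c k^2)+\sigma'(0)k^2=0$, whose discriminant simplifies to $D(k)=((a_c-1)k^4+\delta_c k^2)^2-4\sigma'(0)k^2$. Lemma \ref{cp} rules out the degenerate scenario in which $D(k)<0$ along a sequence $|k|\to\infty$, since such a sequence would produce non-zero purely imaginary spectral values. Therefore $D(k)>0$ for all sufficiently large $|k|$, and the two eigenvalues behave asymptotically as $-k^4+O(k^2)$ and $-a_c k^4+O(k^2)$. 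Since $a_c\ne 0$ by Remark \ref{r3}, $|\mathrm{Re}\,\lambda_\pm(k)|$ grows like $k^4$, so enlarging $K_0$ if necessary one secures $\gamma_2>0$ with $|\mathrm{Re}\,\lambda|\ge\gamma_2$ on the tail. Setting $\gamma:=\tfrac{1}{2}\min(\gamma_1,\gamma_2,\eta_0)$ then yields the claimed uniform spectral gap. The main obstacle will be the tail estimate: one must verify that no admissible parameter combination allows the leading $k^4$ contribution to $\mathrm{Re}\,\lambda$ to cancel, and this is precisely where $a_c\ne 0$—inherited from conditions (a) and (d) via Remark \ref{r3}—plays its essential role.
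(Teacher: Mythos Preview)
Your overall strategy matches the paper's: both argue via the Fourier decomposition that for each $k$ away from the center modes the eigenvalues of $M_k$ have non-zero real part, and that as $|k|\to\infty$ the real parts diverge to $\pm\infty$ by the quadratic formula, so no accumulation on the imaginary axis can occur. Your treatment of the low modes and of the second eigenvalue of $M_{\pm k_0}$ via the trace is correct and more explicit than the paper's terse one-line proof.

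There is, however, a genuine gap in your tail argument. You assert that ``$D(k)<0$ along a sequence $|k|\to\infty$ \ldots\ would produce non-zero purely imaginary spectral values.'' This is false: when $D(k)<0$ the two eigenvalues of $M_k$ are complex conjugates with common real part
\[
-\tfrac{1}{2}\big((a_c+1)k^4-\delta_c k^2\big),
\]
which is in general non-zero, so Lemma~\ref{cp} gives no obstruction. Fortunately the step is easily repaired. One option is to read positivity of $D(k)$ for large $|k|$ directly off your own formula $D(k)=\big((a_c-1)k^4+\delta_c k^2\big)^2-4\sigma'(0)k^2$: the squared leading term dominates the linear-in-$k^2$ correction (and in the single degenerate case $a_c=1$, $\delta_c=0$, condition~(a) forces $\sigma'(0)=-k_0^6<0$, so $D(k)>0$ again). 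A second, cleaner option---closer in spirit to the paper's proof---is to drop the discriminant discussion entirely: if $D(k)<0$ the common real part displayed above already tends to $\pm\infty$ (using $a_c\neq 0$ and, in the borderline case $a_c=-1$, the admissibility conditions), while if $D(k)\ge 0$ your asymptotics $\lambda_\pm(k)=-k^4+O(k^2),\ -a_c k^4+O(k^2)$ apply. Either way $|\mathrm{Re}\,\lambda_\pm(k)|\to\infty$, and the rest of your argument goes through.
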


\begin{proof}
We need to consider the distribution of the roots of equation \eqref{eigen} for $k\neq k_0$. By symmetry, we just need consider the case $|k_0|\neq k\in\mathbb N$. 
By Lemma \ref{cp}, we know that roots of equation \eqref{eigen} for $k\neq k_0$ do not lie in the imaginary axis. Hence we just need to show that there is no accumulation of spectra to the imaginary axis when $k\rightarrow +\infty$. This is obvious by writing down the solutions explicitly through quadratic formula: the real parts of the roots of  \eqref{eigen} can only tend to $\pm\infty$. 
\end{proof}

\begin{remark}\label{con}
$\mathcal \cup_{0\neq k_0\in \mathbbm Z}\mathcal A(k_0)$ contains all the admissible critical configuration points. Our bifurcation analysis will be done in particular around $\nu=(\nu_1,\nu_2)=(0,0)$. The role of the parameter $\mu$ will be self-evident after we get the reduced dynamics (see \eqref{bp}).
\end{remark}

\section{Existence of Parameter-Dependent Center Manifold}\label{STD}

\setcounter{equation}{0}
\setcounter{theorem}{0}

In this section, we show the existence of parameter-dependent center manifold for the bifurcation system \eqref{bifurcation equation} or equivalently the system \eqref{bs} or \eqref{system}. The main ingredient remaining to show is a resolvent estimate. A similar estimate was first done in \cite{Yao}. However, we will reproduce it here to see the roles of the defining conditions in $\mathcal A(k_0)$ and
correct some misprints in \cite{Yao} and also for completeness. Meanwhile, we will add symmetries into consideration in order to make comparisons with our former works (see Section \ref{last}).


Let us first state a version of the parameter-dependent center manifold theorem with group actions.

\begin{theorem} (Parameter dependent center manifold theorem with symmetries, see \cite{HI, GSS, Yao})\label{center manifold theorem}
Let the inclusions in the Banach space triplet $\mathcal Z \subset \mathcal Y\subset \mathcal X$ be continuous. Consider a differential equation in a Banach space $\mathcal X$ of the form
$$\frac{du}{dt}=\mathcal{L}u+\mathcal{R}(u,\nu)$$ and assume that

(1) (Assumption on linear operator and nonlinearity) $\mathcal{L}:\mathcal Z\mapsto\mathcal X$ is a bounded linear map
and for some $k\geq 2$, there exist neighborhoods
$\mathscr{V_u}\subset\mathcal Z$ and
$\mathscr{V_\nu}\subset\mathbb{R}^m$ of $(0,0)$ such that
$\mathcal{R}\in C^k(\mathscr{V}_u\times \mathscr{V_{\nu}},
\mathcal Y)$ and
$$\mathcal{R}(0,0)=0,\,\, D_u\mathcal{R}(0,0)=0.$$

(2) (Spectral decomposition) there exists some constant $\gamma>0$ such that
$$\inf\{Re\lambda; \lambda\in \sigma_u(\mathcal L)\}>\gamma,\,\, \sup\{Re\lambda; \lambda\in \sigma_s(\mathcal L)\}<-\gamma,$$
and the set $\sigma_c(\mathcal L)$ consists of a finite number of eigenvalues with finite algebraic multiplicities.

(3) (Resolvent estimates) For Hilbert space triplet $\mathcal Z \subset \mathcal Y\subset \mathcal X$, assume
there exists a positive constant $\omega_0>0$ such that $i\omega\in \rho(\mathcal L)$ for all $|\omega|>\omega_0$ and
$\|(i\omega-\mathcal L)^{-1}\|_{\mathcal X\mapsto \mathcal X}\lesssim\frac{1}{|\omega|}$. For Banach space triplet, we need further
$\|(i\omega-\mathcal L)^{-1}\|_{\mathcal Y\mapsto \mathcal X}\lesssim\frac{1}{|\omega|^{\alpha}}$ for some $\alpha\in [0,1)$.

Then there exists a map $\Psi\in\mathcal C^k(Z_c, Z_h)$ and a neighborhood 
$\mathscr O_{u}\times \mathscr O_{\nu}$of $(0, 0)$ in $\mathcal Z\times \mathbb R^m$ such that

(a) (Tangency) $\Psi(0, 0)=0$ and $D_u\Psi(0,0)=0$.

(b) (Local flow invariance) the manifold $\mathcal M_0 (\nu)=\{ u_0+\Psi(u_0,\nu); u_0\in Z_c \}$ has the properties.
(i) $\mathcal M_0 (\nu)$ is locally invariant, i.e., if $u$ is a solution satisfying $u(0)\in \mathcal M_0 (\nu)\cap \mathscr O_{\nu}$ and $u(t)\in\mathcal O_{u}$ for all $t\in [0, T]$, then
$u(t)\in\mathcal M_0 (\nu)$ for all $t\in [0, T]$; (ii) $\mathcal M_0 (\nu)$ contains the set of bounded solutions staying in $\mathcal O_{u}$ for all $t\in\mathbb{R}^1$, i.e., if $u$ is a solution satisfying
$u(t)\in\mathcal O_u$ for all $t\in\mathbb R^1$, then $u(0)\in \mathcal M_0 (\nu)$.

(c) (Symmetry) Moreover, if the vector field is equivariant in the sense that there
exists an isometry $\mathscr{T}\in \mathcal L(\mathcal X)\cap \mathcal L(\mathcal Z)$
which commutes with the vector field in the original system,
$$[\mathscr{T}, \mathcal{L}]=0,\,\,[\mathscr{T}, \mathcal{R}]=0,$$
then $\Psi$ commutes
with $\mathscr{T}$ on $Z_c$: $[\Psi, T]=0$.
\end{theorem}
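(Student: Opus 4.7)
The proof follows the classical Lyapunov--Perron approach to center manifold reduction, adapted to the parameter-dependent and equivariant setting. First I would modify the nonlinearity outside a small ball by a smooth cut-off in both $u$ and $\nu$ so as to obtain a globally defined $\widetilde{\mathcal R}(u,\nu)$ that shares the $k$-jet of $\mathcal R$ at the origin and whose Lipschitz constant can be made arbitrarily small. Since $\mathcal M_0(\nu)$ is intrinsically a local object, this modification does not affect the final conclusions (a)--(c).

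Next I would use assumption (2) together with the resolvent-contour-integral projections $P_c,P_s,P_u$ described in the convention paragraph of the introduction to obtain the decomposition $\mathcal Z=Z_c\oplus Z_s\oplus Z_u$ with $Z_c$ finite dimensional. Hille--Yosida/Gearhart--Pr\"uss type arguments, fed by the resolvent estimates in assumption (3), yield exponential dichotomy bounds for $e^{t\mathcal L}P_s$ and $e^{t\mathcal L}P_u$, with a $\mathcal Y\to\mathcal Z$ smoothing factor integrable in $t$ near $0$ precisely because $\alpha<1$; on $Z_c$ the flow is a $C^0$-group in finite dimensions. One then characterizes those trajectories of the modified equation that remain $\eta$-bounded for some $0<\eta<\gamma$ as fixed points of the Lyapunov--Perron integral
\begin{equation}
u(t)=e^{t\mathcal L}u_0+\int_0^t e^{(t-s)\mathcal L}P_c\widetilde{\mathcal R}(u(s),\nu)\,ds+\int_{-\infty}^t e^{(t-s)\mathcal L}P_s\widetilde{\mathcal R}\,ds-\int_t^{+\infty} e^{(t-s)\mathcal L}P_u\widetilde{\mathcal R}\,ds,
\end{equation}
with $u_0\in Z_c$ a free parameter. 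The dichotomy estimates, combined with the smallness of the Lipschitz constant of $\widetilde{\mathcal R}$, make this map a contraction on the weighted space $C^0_\eta(\mathbb R,\mathcal Z)$, producing a unique fixed point $u(\,\cdot\,;u_0,\nu)$. Setting $\Psi(u_0,\nu):=(P_s+P_u)u(0;u_0,\nu)$ defines the reduction function; tangency (a) and local invariance (b) then follow from $\widetilde{\mathcal R}(0,0)=0$, $D_u\widetilde{\mathcal R}(0,0)=0$, and the structure of the fixed-point equation.

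The $C^k$ smoothness of $\Psi$ is the step I expect to be the main obstacle: one cannot differentiate the fixed-point equation naively, because each differentiation can consume a factor of the smoothing budget in the Lyapunov--Perron integrals, and one must track the interplay between the weighted norms and the fractional loss $|\omega|^{-\alpha}$ in the Banach (as opposed to Hilbert) setting. The standard remedy is the fiber contraction principle applied to the formal linearizations of the fixed-point equation, which are themselves contractions in progressively weighted spaces $C^j_\eta$ with weights chosen so that $j\eta<\gamma$, bootstrapped on $j=1,\dots,k$. This is the only place where assumption (3) for the Banach triplet is used essentially.

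Finally, for the equivariance statement (c), I would choose the cut-off used to construct $\widetilde{\mathcal R}$ to be $\mathscr T$-invariant, which is possible because $\mathscr T$ is an isometry on both $\mathcal X$ and $\mathcal Z$ and commutes with $\mathcal R$. Then $\mathscr T u(\,\cdot\,;u_0,\nu)$ is another $\eta$-bounded solution of the cut-off equation whose center-projection at $t=0$ equals $\mathscr T u_0$; uniqueness in the Lyapunov--Perron space forces $\mathscr T u(\,\cdot\,;u_0,\nu)=u(\,\cdot\,;\mathscr T u_0,\nu)$. Applying $P_s+P_u$ at $t=0$ yields $\mathscr T\Psi(u_0,\nu)=\Psi(\mathscr T u_0,\nu)$, which is precisely $[\Psi,\mathscr T]=0$ on $Z_c$.
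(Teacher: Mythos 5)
The paper does not prove this theorem: it is quoted as a known result from the cited references (Haragus--Iooss, Golubitsky--Stewart--Schaeffer, and the third author's earlier paper), and your Lyapunov--Perron sketch is essentially the proof given in \cite{HI} (cut-off, spectral projections, weighted fixed-point space $C^0_\eta$, fiber contraction for $C^k$ regularity, and uniqueness of the fixed point to transfer the symmetry). One caveat is worth flagging: you derive the hyperbolic estimates from ``exponential dichotomy bounds for $e^{t\mathcal L}P_s$ and $e^{t\mathcal L}P_u$,'' but the hypotheses (1)--(3) do not guarantee that $\mathcal L_s$ or $-\mathcal L_u$ generates a semigroup --- the theorem is stated precisely so as to cover non-sectorial operators (the paper even allows $a_c<-1$), and assumption (2) only locates the spectrum while (3) only controls the resolvent on the imaginary axis. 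The standard remedy, and the route taken in \cite{HI}, is to bypass semigroups entirely and construct the bounded solution operator $\mathcal K_h:C^0_\eta(\mathbb R,\mathcal Y_h)\to C^0_\eta(\mathbb R,\mathcal Z_h)$ for the inhomogeneous hyperbolic equation directly from the resolvent via a Fourier multiplier argument (Plancherel in the Hilbert case, which is where the exponent $\alpha<1$ and the $1/|\omega|$ decay are consumed); with that substitution your argument goes through as written.
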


It is easy to see that in the above Theorem \ref{center manifold theorem}, the linear operator is not required to be sectorial. Another subtle point is that the center manifolds are generally not unique but the center manifold reduction function can be approximated uniquely up to any finite order (hence we use ``manifold" and ``manifolds" interchangeably) as long as the nonlinearity has enough smoothness (see for example, Theorem 2.5 in page 35 and Theorem 10 in page 120 of \cite{Ca}).  More insight can be drawn here. In particular, this uniqueness of approximation to any finite order enables us to get full dynamics on center manifold through center manifold reduction. Of course, the distinct properties of the systems under consideration should be taken into account.
We have the following lemma regarding system \eqref{bifurcation equation}. 
\begin{lemma}\label{plemma}
(Existence of parameter-dependent center manifolds)
For system \eqref{bifurcation equation}, there exists a map
$\Psi\in\mathcal{C}^k( Z_c\times \mathbb{R}^1, Z_h)$, with
$$\Psi(0, 0)=0,\,\, D_U\psi(0,0)=0,$$
and a neighborhood of $\mathcal{O}_U\times\mathcal{O}_{\nu}$ of $(0,
0)$ such that for $\nu\in \mathcal{O}{\mu}$, the manifold
$$\mathcal{M}_0(\nu):=\{U_0 +\Psi(U_0, \nu);\,\, U_0\in Z_c\}$$
is locally invariant and contains the set of bounded solutions of
the nonlinear perturbation system in $\mathcal{O}_U$ for all
$t\in\mathbb{R}$. 
\end{lemma}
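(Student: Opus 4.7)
The plan is to invoke Theorem \ref{center manifold theorem} with the Hilbert space triplet $Z\subset Y=X\subset X$ introduced in Section 2, the linear operator $\mathcal L(a_c,\delta_c)$, and the parameter-dependent nonlinearity $R(U,\nu)$ extracted in Section 4. The task then reduces to verifying the three hypotheses of that theorem. Of these, hypotheses (1) and (2) are essentially bookkeeping; the resolvent bound (3) is where the actual work lies.

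For hypothesis (1), boundedness of $\mathcal L(a_c,\delta_c)\colon Z\to X$ is immediate because $Z=H^4_{per}$ (mean zero) and $\mathcal L$ is a fourth-order constant-coefficient differential operator. Using the splitting $R=R_{11}+R_{20}+R_{30}+\tilde R$ from Section 4, the piece $R_{11}$ is linear in $U$ and $(\nu_1,\nu_2)$; the pieces $R_{20}$, $R_{30}$ are polynomial in $U$ and land in $X$ because $H^4_{per}$ is a Banach algebra in one spatial dimension; and $\tilde R(U)=\partial_x\Gamma(U^{(1)})$ inherits $C^k$ smoothness from the assumed smoothness of $\sigma$. In all cases $R(0,0)=0$ and $D_UR(0,0)=0$. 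Hypothesis (2) is supplied by Lemma \ref{cp}, which isolates $\sigma_c(\mathcal L(a_c,\delta_c))=\{0\}$ as a finite-multiplicity eigenvalue coming from the modes $k=\pm k_0$, and by Lemma \ref{gap}, which gives the uniform gap $\gamma>0$ between $\sigma_s\cup\sigma_u$ and the imaginary axis.

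The core of the proof is hypothesis (3). Since $\mathcal L(a_c,\delta_c)$ is block-diagonal under the Fourier decomposition $X=\bigoplus_{k\in\mathbb Z\setminus\{0\}}\mathbb C^2$ with blocks $M_k$, the required bound $\|(i\omega-\mathcal L)^{-1}\|_{X\to X}\lesssim 1/|\omega|$ is equivalent to
\begin{equation*}
\sup_{k\in\mathbb Z\setminus\{0\}}\bigl\|(i\omega I-M_k)^{-1}\bigr\|_{\mathbb C^2}\lesssim \frac{1}{|\omega|}\qquad\text{for }|\omega|\ge\omega_0.
\end{equation*}
Using the adjugate formula I would get $\|(i\omega I-M_k)^{-1}\|\lesssim (|\omega|+k^4)/|D_k(\omega)|$, where
\begin{equation*}
D_k(\omega)=-\omega^2+a_c k^4(k^4-\delta_c k^2)+\sigma'(0)k^2 + i\omega\bigl[(a_c+1)k^4-\delta_c k^2\bigr],
\end{equation*}
and the target is $|D_k(\omega)|\gtrsim |\omega|(|\omega|+k^4)$. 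I would split into three regimes: (i) $|\omega|\gtrsim k^4$, where the real part gives $|D_k(\omega)|\gtrsim\omega^2$; (ii) $k^4\gg|\omega|$, where the leading piece $a_ck^8$ of the real part dominates and yields $|D_k(\omega)|\gtrsim |a_c|k^8$, here crucially invoking $a_c\neq 0$ from Remark \ref{r3}; and (iii) the transitional range $|\omega|\sim k^4$, handled by combining the real-part bound with the imaginary-part bound $|\omega|\,|(a_c+1)k^4-\delta_c k^2|$. At the resonant mode $k=k_0$ the real part collapses to $-\omega^2$ because of condition (a), but the imaginary part carries the nonzero factor $[(a_c+1)k_0^2-\delta_c]k_0^2$ supplied by condition (c), restoring the bound.

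I expect the main obstacle to be the uniformity in $k$ in the transition regime $|\omega|\sim k^4$, where neither the real nor the imaginary part of $D_k(\omega)$ individually dominates, so one must track both cancellations. Once this Fourier-side resolvent bound is established, Lemma \ref{plemma} follows directly from Theorem \ref{center manifold theorem}. I would also record, for use later, that the isometry group $O(2)$ acting by translations $x\mapsto x+\theta$ and reflection $x\mapsto -x$ commutes with both $\mathcal L(a_c,\delta_c)$ and $\mathcal N$, so part (c) of Theorem \ref{center manifold theorem} yields that the reduction function $\Psi$ is $O(2)$-equivariant on $Z_c$; this equivariance of $\Psi$ will be indispensable for the center-manifold computations performed in the subsequent sections.
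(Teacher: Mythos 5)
Your verification of hypotheses (1) and (2) matches the paper, but your treatment of the resolvent estimate --- the heart of the lemma --- takes a genuinely different route. The paper proves $\|(i\omega-\mathcal L(a_c,\delta_c))^{-1}\|_{X\to X}\lesssim 1/|\omega|$ by an energy argument in physical space: it pairs $(i\omega-\mathcal L)U=\tilde U$ with $U^*$, integrates by parts over $[-\pi,\pi]$, separates real and imaginary parts to control $\omega|\tau|_{L^2}^2$, $\omega|u|_{L^2}^2$, $|\partial_x^2\tau|_{L^2}^2$, $|\partial_x^2u|_{L^2}^2$, and closes the estimate with interpolation inequalities and a smallness absorption. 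You instead exploit the constant-coefficient, block-diagonal structure and reduce everything to the uniform bound $\sup_k\|(i\omega I-M_k)^{-1}\|\lesssim 1/|\omega|$ via the adjugate formula and a case analysis on $|\omega|$ versus $k^4$; your regime analysis is sound (for $a_c>0$ the near-vanishing of the real part at $|\omega|\sim\sqrt{a_c}\,k^4$ is rescued by the imaginary part $\omega k^2[(a_c+1)k^2-\delta_c]\sim(a_c+1)k^8$, for $a_c<0$ the real part $-\omega^2+a_ck^8+O(k^6)$ is already coercive, and the finitely many exceptional modes are absorbed into the choice of $\omega_0$), so the proof goes through. Two small observations: at the resonant mode $k=k_0$ the real part $-\omega^2$ alone already gives the bound for $|\omega|\ge\omega_0$ with $k_0$ fixed, so condition (c) is not actually needed for the resolvent estimate (the paper likewise only invokes $a_c\neq0$ there; condition (c) enters later through the normalization $\kappa$ of the dual eigenvector); and your Fourier-side argument makes the role of $a_c\neq0$ more transparent, while the paper's energy method is the one that would survive a passage to variable coefficients. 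Your closing remark on $O(2)$-equivariance of $\Psi$ coincides with the paper's use of part (c) of Theorem \ref{center manifold theorem}.
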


\begin{proof}
From the definition of the operator $\mathcal L(a_c,\delta_c)$ and $R(U,\nu)$ in \eqref{bifurcation equation}, we know that the 
assumptions (1) and (2) in Theorem \ref{center manifold theorem} on the linear operator and nonlinearity hold. 
\textit{A subtle point here is that the highest 
order of derivatives in $R(U,\nu)$ is four,}  which is allowed by our space triplet choice $Z\subset Y\subset X$ in Section 2 and Theorem \ref{center manifold theorem}.
The spectrum decomposition assumption is a direction consequence of the analysis in Section \ref{spectrum2}, 
see Lemma \ref{cp} and Lemma \ref{gap}. It is obvious that $i\omega\in\rho(\mathcal L(a_c,\delta_c))$ for $|\omega|>0$. 
To show the resolvent estimate in the current Hilbert space triplet setting, we write $(i\omega-\mathcal L(a_c,\delta_c)) U=\tilde U$ for $\tilde U=\begin{pmatrix}\tilde\tau\\ \tilde u \end{pmatrix} \in X$ and 
$U=\begin{pmatrix}\tau \\ u \end{pmatrix} \in Z $ and show that $\|U\|_{X}\lesssim\frac{1}{|\omega|}\|\tilde U\|_{X}$ for $|\omega|>\omega_0>0$ where $\omega_0\gg 1$ is a large constant. Without loss of generality, we just need to prove for $\omega\geq \omega_0$. Multiplying the equation $(i\omega-\mathcal L(a_c,\delta_c)) U=\tilde U$ by $U^*$, integrating over $[-\pi, +\pi]$ and integrating by parts, we arrive at
$$i\omega|\tau|^2_{L^2}+a_c|\partial_x^2\tau|_{L^2}^2-\int \partial_x u\tau^*=\int \tilde{\tau}\tau^*;$$
$$\int-\sigma'(0)\partial_x\tau u^*+i \omega |u|_{L^2}^2-\delta_c\int|\partial_x u|^2+\int|\partial_x^2 u|^2=\int\tilde{u}u^*.$$
Taking the imaginary and real parts respectively, we see

$$\omega|\tau|_{L^2}^2=Im\int \partial_x u\tau^* +Im \int \tilde{\tau}\tau^*,$$
$$\omega |u|_{L^2}^2=  \sigma'(0) Im  \int \partial_x\tau u^* +Im \int \tilde{u}u^*$$
and
$$a_c |\partial_x^2\tau|^2_{L^2}=Re\int \partial_x u\tau^* +Re\int \tilde{\tau}\tau^*,$$
$$-\delta_c |\partial_x u|^2_{L^2}+\int|\partial_x^2 u|^2=Re\int\tilde{u}u^* +Re\int \sigma'(0)\partial_x\tau u^*.$$

From the imaginary part equations, we get by using elementary
inequalities, Fourier analysis and mean zero property for elements in the space $X$, that
\begin{equation}\label{r}
\omega |\tau|^2_{L^2}\lesssim\frac{1}{\omega}\Big(|\partial_x u|^2_{L^2}+|\tilde{\tau}|^2_{L^2}  \Big)\leq \frac{1}{\omega}\Big(|\partial^2_x u|^2_{L^2}+|\tilde{\tau}|^2_{L^2}  \Big),
\end{equation}
\begin{equation}\label{i}
\omega |u|_{L^2}^2\lesssim \frac{1}{\omega}\Big(|\partial_x\tau|^2_{L^2}+|\tilde{u}|^2_{L^2}  \Big)\leq  \frac{1}{\omega}\Big(|\partial^2_x\tau|^2_{L^2}+|\tilde{u}|^2_{L^2}  \Big).
\end{equation}
In view of $a_c\neq 0$ (see Remark \eqref{r3}), we obtain from the real part equations that
\begin{equation}\label{real1}
|\partial_x^2\tau|^2_{L^2}\lesssim|\tau|_{L^2}\Big( |\tilde{\tau}|_{L^2}+|\partial_x u|_{L^2}  \Big),\end{equation}
\begin{equation}\label{real2}
|\partial_x^2 u|^2_{L^2}\lesssim|u|_{L^2}\Big(
|\tilde{u}|_{L^2}+|\partial_x\tau|_{L^2}  \Big)+|\partial_x
u|^2_{L^2}.
\end{equation}
Notice that \eqref{real2} is valid for any real numbers $\delta_c$. In particular, $\delta_c$ can be 0. 
By interpolation, we know that for any $\epsilon>0$, the following holds
$$|\partial_x u|^2_{L^2}\leq \epsilon|\partial_x^2 u|^2_{L^2}+C(\epsilon)|u|^2_{L^2}.$$
We may pick $\epsilon$ so small that we can conclude from $\eqref{real2}$
that
$$|\partial_x^2 u|^2_{L^2}\lesssim|u|_{L^2}\Big(|\tilde{u}|_{L^2}+|\partial_x\tau|_{L^2} +|u|_{L^2}  \Big).$$
By interpolation in $|\partial_x\tau|_{L^2}$,
$$|\partial_x\tau|^2_{L^2}\lesssim\epsilon |\partial_x^2\tau|^2_{L^2}+C(\epsilon)|\tau|^2_{L^2},$$
we can get from \eqref{real1} and \eqref{real2} that
\begin{equation}\label{3}
\begin{cases}
|\partial_x^2\tau|^2_{L^2}\lesssim
|\tau|^2_{L^2}+|\tilde{\tau}|^2_{L^2}+\epsilon|\partial_x^2 u|^2_{L^2}+|u|^2_{L^2},\\
|\partial_x^2 u|^2_{L^2}\lesssim
|u|^2_{L^2}+|\tilde{u}|^2_{L^2}+\epsilon|\partial_x^2\tau|^2_{L^2}+\epsilon|\partial_x^2
u|^2_{L^2}.
\end{cases}
\end{equation}
Adding the two equations in \eqref{3} together and choosing $\epsilon$
smaller if necessary, we get
\begin{equation}\label{4}
|\partial_x^2\tau|^2_{L^2}+|\partial_x^2u|^2_{L^2}\lesssim|\tau|^2_{L^2}+|u|^2_{L^2}+|\tilde{\tau}|^2_{L^2}+|\tilde{u}|^2_{L^2}.
\end{equation}
Now we have in view of \eqref{r}, \eqref{i} and \eqref{4} that
\begin{align*}
\omega|\tau|^2_{L^2}&\lesssim\frac{1}{\omega}|\tilde{\tau}|^2_{L^2}+\frac{1}{\omega}|\partial_x^2
u|^2_{L^2}\\
&\lesssim\frac{1}{\omega}|\tilde{\tau}|^2_{L^2}+\frac{1}{\omega}\Big(|\tau|^2_{L^2}+|u|^2_{L^2}+|\tilde{\tau}|^2_{L^2}+|\tilde{u}|^2_{L^2}
\Big).
\end{align*}
\begin{align*}
\omega |u|^2_{L^2}&\lesssim\frac{1}{\omega}|\tilde{u}|^2_{L^2}+\frac{1}{\omega}|\partial_x^2
\tau|^2_{L^2}\\
&\lesssim\frac{1}{\omega}|\tilde{u}|^2_{L^2}+\frac{1}{\omega}\Big(|\tau|^2_{L^2}+|u|^2_{L^2}+|\tilde{\tau}|^2_{L^2}+|\tilde{u}|^2_{L^2}
\Big).
\end{align*}
Adding the above two inequalities together, we get
$$(-\frac{1}{\omega}+\omega) |U|^2_{L^2}\lesssim \frac{1}{\omega} |\tilde U|^2_{L^2},$$
which implies
$$|U|^2_{L^2}\lesssim\frac{1}{\omega(\omega-\frac{1}{\omega})}|\tilde U|^2_{L^2}
=\frac{1}{\omega^2 -1}|\tilde U|^2_{L^2}
\lesssim\frac{1}{\omega^2}|\tilde U|^2_{L^2}.$$
for $\omega\geq \omega_0$ large. Hence we arrive at the estimate of desired
form $|U|_{L^2}\lesssim\frac{1}{\tilde{\omega}}|\tilde U|_{L^2}$. 
\end{proof}

\begin{remark}\label{gcm} The second inequalities 
in \eqref{r} and \eqref{i} hold with our choice of space triplet as there are derivatives acting on the function $\tau$ and $u$.
\end{remark}

Now we define the following operations
\begin{equation}\label{symmetry}
R_{\phi}\begin{pmatrix}\tau(x)\\u(x) \end{pmatrix}=\begin{pmatrix}\tau(x+\phi)\\ u(x+\phi) \end{pmatrix},\quad
S\begin{pmatrix}\tau(x)\\u(x) \end{pmatrix}=\begin{pmatrix}\tau(-x)\\ -u(-x) \end{pmatrix},
\end{equation}
for any $\phi\in\mathbb R/2\pi\mathbb Z$. We can easily verify that system \eqref{system} (equivalently \eqref{bs}) is equivariant:
\begin{equation}\label{equivairance}
[R_{\phi}, \mathcal L(a_c, \delta_c)]=0, [R_{\phi}, R(\cdot, \nu)]=0, [S, R(\cdot, \nu)]=0, [S, \mathcal L(a_c, \delta_c)]=0,
R_{\phi}S=SR_{-\phi}.
\end{equation}
Therefore, the center manifold function in Lemma \ref{plemma} also inherits the above symmetries.

\section{Dynamics on Center Manifold}
\setcounter{equation}{0}
\setcounter{theorem}{0}

In this section, we compute and analyze the dynamics on the center manifold for our bifurcation system \eqref{bifurcation equation}. As we want to demonstrate some subtle points during the process and also to make the exposition clear, we divide this section into subsections.

\subsection{Center space and parametrization}

First, we compute the center space of the operator $\mathcal L(a_c,\delta_c)$. Recall that $$\mathcal L(a_c, \delta_c):=\begin{pmatrix} -a_c \partial_x^4 & \partial_x\\ \sigma'(0)\partial_x & -\delta_c \partial_x^2-\partial_x^4 \end{pmatrix}.$$

For the wave number $k_0$, letting $\xi=e^{ik_0x}V=e^{ik_0x}\begin{pmatrix}v_1\\ v_2 \end{pmatrix}$, we get in Fourier side the system satisfied by $V$:

\begin{equation}\label{V1}
\begin{pmatrix}-a_c k_0^4 &ik_0\\ \sigma'(0)ik_0 &
\delta_c k_0^2-k_0^4\end{pmatrix}\begin{pmatrix}v_1\\ v_2
\end{pmatrix}=0\begin{pmatrix} v_1\\ v_2
\end{pmatrix},
\end{equation}
which is equivalent to

\begin{equation}
\begin{cases}
-a_c k_0^4v_1+ik_0 v_2=0\\
\sigma'(0)ik_0v_1+(\delta_ck_0^2-k_0^4)v_2=0.
\end{cases}
\end{equation}

From condition $(a)$ in the definition of $\mathcal A(k_0)$, we know that 
$$\det \begin{pmatrix}-a_c k_0^4 &ik_0\\ \sigma'(0)ik_0 &
\delta_c k_0^2-k_0^4\end{pmatrix}=k_0^2[a_ck_0^4(k_0^2-\delta_c)+\sigma'(0)]=0$$ 
and get
$$a_ck_0^4v_1=ik_0 v_2,\quad i.e.,\quad v_2=\frac{a_ck_0^4}{ik_0}v_1=-ia_ck_0^3 v_1.$$

Consequently, we can choose
$$\xi=e^{ik_0x}\begin{pmatrix}1\\-ia_ck_0^3 \end{pmatrix}.$$

By conjugacy, for the wave number $-k_0$, we seek solutions of the form $e^{-ik_0x}V=e^{-ik_0x}\begin{pmatrix}v_1\\ v_2 \end{pmatrix}$ and get that
$$\xi^*=e^{-ik_0x}\begin{pmatrix}1\\ia_ck_0^3 \end{pmatrix}.$$

Then the center space $Z_h$ which is nothing but $\ker \mathcal L(a_c,\delta_c)$ can be parametrized by
$$Z_h=\{A\xi+A^*\xi^*;\, A\in\mathbbm C^1\}.$$
This parametrization will be important for the later computations.

\subsection{The dual Kernel $\mathcal L^*(a_c,\delta_c)$} Next, we compute the kernel of the conjugate operator $\mathcal L^*(a_c,\delta_c)$. By simple checking based on the definition of conjugate operator, we get
$$\mathcal L^*(a_c, \delta_c):=\begin{pmatrix} -a_c \partial_x^4 & -\sigma'(0)\partial_x\\ -\partial_x & -\delta_c \partial_x^2-\partial_x^4 \end{pmatrix}.$$

Seeking elements of the form $\eta=\kappa e^{ik_0x}V=\kappa e^{ik_0x}\begin{pmatrix}v_1\\ v_2 \end{pmatrix}$ in the dual kernel where $\kappa$ is a renomalization constant to be picken, we get
\begin{equation}\label{V1}
\begin{pmatrix}-a_c k_0^4 &\sigma'(0)ik_0\\ -ik_0 &
\delta_c k_0^2-k_0^4\end{pmatrix}\begin{pmatrix}v_1\\ v_2
\end{pmatrix}=0\begin{pmatrix} v_1\\ v_2
\end{pmatrix},
\end{equation}
which is equivalent to
\begin{equation}
\begin{cases}
-a_c k_0^4v_1-\sigma'(0)ik_0 v_2=0\\
-ik_0v_1+(\delta_ck_0^2-k_0^4)v_2=0.
\end{cases}
\end{equation}

Since $\det \begin{pmatrix}-a_c k_0^4 &ik_0\\ \sigma'(0)ik_0 &
\delta_c k_0^2-k_0^4\end{pmatrix}=0$, we get the relation between $v_1$ and $v_2$ as
$$-a_ck_0^4v_1-\sigma'(0)ik_0v_2=0.$$

Since $a_c\neq0$, we know $v_1=\frac{\sigma'(0)ik_0}{-a_ck_0^4}v_2=-i\frac{\sigma'(0)}{a_ck_0^3}v_2=-ik_0(\delta_c-k_0^2)v_2$. Consequently, we get
$$\eta=\kappa  e^{ik_0x}\begin{pmatrix} -ik_0(\delta_c-k_0^2)\\1 \end{pmatrix}.$$

For computational convenience, we choose $\kappa$ such that $\langle \eta, \xi\rangle=1$. By direct computation, we get

\begin{align*}
\langle \eta, \xi\rangle&=\langle \kappa  e^{ik_0x}\begin{pmatrix} -ik_0(\delta_c-k_0^2)\\1 \end{pmatrix}, e^{ik_0x}\begin{pmatrix} 1\\-ia_ck_0^3 \end{pmatrix}  \rangle\\
&=\int \kappa  e^{ik_0x}\begin{pmatrix} -ik_0(\delta_c-k_0^2)\\1 \end{pmatrix} e^{-ik_0x}\begin{pmatrix} 1\\ia_ck_0^3 \end{pmatrix}\\
&=2\pi[-ik_0(\delta_c-k_0^2)+ia_ck_0^3]\kappa\\
&= 2\pi ik_0[(a_c+1)k_0^2-\delta_c]\kappa\\
&=1,
\end{align*}
which suggests that
$$\kappa=\Big(2\pi ik_0[(a_c+1)k_0^2-\delta_c]\Big)^{-1}$$
and
$$ \eta=\Big(2\pi ik_0[(a_c+1)k_0^2-\delta_c]\Big)^{-1}  e^{ik_0x}\begin{pmatrix} -ik_0(\delta_c-k_0^2)\\1 \end{pmatrix}.$$
 
 By our choice of $\kappa$ and conjugacy of inner product, we have the following duality numerical relations:
$$\langle \eta, \xi \rangle=1,  \langle \eta^*, \xi^* \rangle=1, \langle \eta^*, \xi \rangle=0, \langle \eta, \xi^* \rangle=0.$$
Also, we have obtained by now
$$\ker\mathcal L^*(a_c,\delta_c)=\{B\eta+B^*\eta^*;\, B\in\mathbbm C^1\}.$$
We end this subsection by the following remark:
\begin{remark}\label{rc}
 The denominator in $\kappa$ which is the coefficient of the first order term in the characteristic equation \eqref{eigen} up to a nonzero constant  involving $k_0^2$ is not zero for $(a_c,\delta_c)\in \mathcal A(k_0)$.  
 \end{remark}

\subsection{The projection operator $\mathbbm{P}$} In this subsection, we will introduce the projection operator $\mathbbm P$ to make the remaining computations elegant and efficient. To do so, we decompose the space triplet $Z\subset Y\subset X$ as follows:
$$Z=Z_c\oplus Z_h,\, Y=Y_c\oplus Y_h, \, X=X_c\oplus X_h,$$
where the subindices ``$c$" and ``$h$" stand for center and hyperbolic spaces respectively and
$$Z_c=Y_c=X_c=\{A\xi+A^*\xi^*;\, A\in\mathbbm C^1\}.$$
For an element $U\in Z$ or in $Y, X$, we decompose it according to the above space decompositions as
\begin{align*}
U=&A\xi+A^* \xi^*+(U-A\xi-A^* \xi^*)\\
=& \langle \eta^*, U \rangle \xi+ \langle \eta, U \rangle \xi^*+(U- \langle \eta^*, U \rangle \xi- \langle \eta, U \rangle \xi^*)\\
=&\mathbbm PU+(1-\mathbbm P)U
\end{align*} 
where the projection operator $\mathbbm P$ is defined as
$$\mathbbm P\cdot=\langle \eta^*, \cdot \rangle \xi+ \langle \eta, \cdot \rangle \xi^*.$$

\subsection{The center manifold reduction function}
In this subsection, we will compute the second order approximation of the center manifold reduction function $\Psi$ to make preparations for the study of full dynamics of \eqref{system} on center manifolds.
By now, we have shown the existence of center manifold and parametrized the center space of $\mathcal L(a_c,\delta_c)$ in terms of the complex conjugate pair $(A, A^*)$.
Hence we can correspondingly decompose an element $U=U(x,t)$ by $U=A(t)\xi+A(t)\xi^* +\Psi(A, A^*)$ where
$A=A(t)\in \mathbbm{C}^1$ is a complex function in time variable $t$ and $\Psi(A, A^*)\in Y_h$ is the center manifold function.

For briefty, we define
$$R_{20}(A, A^*):=R_{20}(A(t)\xi+A(t)\xi^*, A(t)\xi+A(t)\xi^*)$$
and
$$R_{30}(A, A^*):=R_{20}(A(t)\xi+A(t)\xi^*, A(t)\xi+A(t)\xi^*, A(t)\xi+A(t)\xi^*).$$

The second order approximation of the center manifold function $\Psi(A, A^*)$ is given by $\Psi(A, A^*)=(-\mathcal L(a_c,\delta_c))^{-1}(1-\mathbbm P)R_{20}(A, A^*)+O(|A|^3)$.
Simple computation yields
\begin{align*}
R_{20}(A, A^*)&=\begin{pmatrix}0\\ \frac{\sigma''(0)}{2}\partial_x[(Ae^{ik_0x}+A^*e^{-ik_0x})^2] \end{pmatrix}\\
&= ik_0\sigma''(0)\begin{pmatrix} 0\\ e^{2ik_0x}A^2-e^{-2ik_0x}A^{*2} \end{pmatrix}\\
&=ik_0\sigma''(0)[e^{2ik_0x}\begin{pmatrix} 0\\ A^2\end{pmatrix}-e^{-2ik_0x}\begin{pmatrix} 0\\ A^{*2}\end{pmatrix}]
\end{align*}

We can easily check

\begin{align*}
\langle \eta^*, R_{20}(A, A^*) \rangle&=\int \kappa e^{-ik_0x}\begin{pmatrix}ik_0 (\delta_c-k_0^2)\\1 \end{pmatrix}\cdot[(-ik_0\sigma''(0))[e^{2ik_0x}\begin{pmatrix} 0\\ A^2\end{pmatrix}-e^{-2ik_0x}\begin{pmatrix} 0\\ A^{*2}\end{pmatrix}]\\
&=\int \kappa\begin{pmatrix}ik_0 (\delta_c-k_0^2)\\1 \end{pmatrix}\cdot[(-ik_0\sigma''(0))[e^{ik_0x}\begin{pmatrix} 0\\ A^2\end{pmatrix}-e^{-3ik_0x}\begin{pmatrix} 0\\ A^{*2}\end{pmatrix}]\\
&=0.
\end{align*}

Similarly, we can also check that $\langle \eta, R_{20}(A, A^*) \rangle=0$. Hence we conclude 
$$\mathbbm PR_{20}(A, A^*)=\langle \eta^*, R_{20}(A, A^*) \rangle \xi+ \langle \eta, R_{20}(A, A^*) \rangle \xi^*=0.$$
Consequently, 
$$\Psi(A, A^*)=(-\mathcal L(a_c,\delta_c))^{-1}(1-\mathbbm P)R_{20}(A, A^*)=(-\mathcal L(a_c,\delta_c) )^{-1}R_{20}(A, A^*)+O(|A|^3).$$

To proceed further, we compute $(-\mathcal L(a_c,\delta_c))^{-1}R_{20}(A, A^*)$ which we define by $\Phi(A, A^*)$. As $R_{20}(A,A^*)$ is in the hyperbolic space,
the equality 
$$(-\mathcal L(a_c,\delta_c))^{-1}R_{20}(A, A^*)=\Phi(A, A^*)$$
 is equivalent to that 
$$-\mathcal L(a_c,\delta_c) \Phi(A, A^*)=R_{20}(A,A^*),$$
hence can be written specifically as

\begin{equation}\label{Phi}
-\mathcal L(a_c,\delta_c)\Phi(A, A^*)=ik_0\sigma''(0)[e^{2ik_0x}\begin{pmatrix} 0\\ A^2\end{pmatrix}-e^{-2ik_0x}\begin{pmatrix} 0\\ A^{*2}\end{pmatrix}]
\end{equation}

Due to the form in the right hand side of the above equation and the fact that $\Phi(A, A^*)$ is real-valued, we should assume that
$$\Phi(A,A^*)=i(e^{2ik_0x}V-e^{-2ik_0x}V^*),$$
where $V\in \mathbbm C^2$ is to be determined. The above observation enables us to carry out the current program.
Now we observe that \eqref{Phi} is equivalent to that

\begin{equation*}
-\mathcal L(a_c,\delta_c)e^{2ik_0x}V=k_0\sigma''(0)e^{2ik_0x}\begin{pmatrix} 0\\ A^2\end{pmatrix}.
\end{equation*} 

To solve for the complex vector $V\in\mathbbm C^2$, we write the above equation specifically as

\begin{equation}
\begin{pmatrix} 16a_c k_0^4 & -2ik_0 \\ -2ik_0\sigma'(0) & -4\delta_ck_0^2+16k_0^4 \end{pmatrix}\begin{pmatrix} v_1\\v_2\end{pmatrix}=\begin{pmatrix} 0\\ k_0\sigma''(0)A^2\end{pmatrix}.
\end{equation}

Before we solve this elementary system of algebraic equation, let us first claim that
$$\det\begin{pmatrix} 16a_c k_0^4 & -2ik_0 \\ -2ik_0\sigma'(0) & -4\delta_ck_0^2+16k_0^4 \end{pmatrix}\neq 0.$$
The above claim is a simple consequence of the facts that $k_0\sigma''(0)e^{2ik_0x}\begin{pmatrix} 0\\ A^2\end{pmatrix}\in X_h$, $\Phi(A,A^*)\in X_h$, and
$-\mathcal L(a_c,\delta_c)|_{X_h}$ is invertible. Actually, we can also directly compute it out as 

$$\det\begin{pmatrix} 16a_c k_0^4 & -2ik_0 \\ -2ik_0\sigma'(0) & -4\delta_ck_0^2+16k_0^4 \end{pmatrix}=(2k_0)^2[\sigma'(0)+a_c(2k_0)^4((2k_0)^2-\delta_c)]\neq 0$$
at any admissible critical configuration pairs.
Now simple computations yield

\begin{equation}\label{v_1}
\begin{cases}
v_1=\frac{i\sigma''(0)A^2}{2[\sigma'(0)+a_c(2k_0)^4((2k_0)^2-\delta_c)]}\\
v_2=\frac{8a_c k_0^3 \sigma''(0)A^2}{2[\sigma'(0)+a_c(2k_0)^4((2k_0)^2-\delta_c)]}
\end{cases}
\end{equation}

Noticing the relation $a_ck_0^4(k_0^2-\delta_c)+\sigma'(0)=0$, we get

$$\sigma'(0)+a_c(2k_0)^4((2k_0)^2-\delta_c)=3a_ck_0^4(21k_0^2-5\delta_c)\neq 0.$$

Then we can simplify the expressions of $v_1$ and $v_2$ further as

\begin{equation}
\begin{cases}
v_1=\frac{i\sigma''(0)A^2}{6a_ck_0^4(21k_0^2-5\delta_c)}\\
v_2=\frac{8a_c k_0^3 \sigma''(0)A^2}{6a_ck_0^4(21k_0^2-5\delta_c)}.
\end{cases}
\end{equation}

To avoid ambiguity and for later reference, we will denote from now on that

$$\phi_1=\frac{i\sigma''(0)A^2}{6a_ck_0^4(21k_0^2-5\delta_c)}, \,\, \phi_2=\frac{8a_c k_0^3 \sigma''(0)A^2}{6a_ck_0^4(21k_0^2-5\delta_c)}.$$

To sum up, we have comuted that

$$\Phi(A,A^*)=i(e^{2ik_0x}\begin{pmatrix}\frac{i\sigma''(0)A^2}{6a_ck_0^4(21k_0^2-5\delta_c)}\\ \frac{8a_c k_0^3 \sigma''(0)A^2}{6a_ck_0^4(21k_0^2-5\delta_c)}\end{pmatrix}-e^{-2ik_0x}\begin{pmatrix}\frac{-i\sigma''(0)A^2}{6a_ck_0^4(21k_0^2-5\delta_c)}\\ \frac{8a_c k_0^3 \sigma''(0)A^2}{6a_ck_0^4(21k_0^2-5\delta_c)}\end{pmatrix}),$$
and 
$$\Psi(A,A^*)=i(e^{2ik_0x}\begin{pmatrix}\frac{i\sigma''(0)A^2}{6a_ck_0^4(21k_0^2-5\delta_c)}\\ \frac{8a_c k_0^3 \sigma''(0)A^2}{6a_ck_0^4(21k_0^2-5\delta_c)}\end{pmatrix}-e^{-2ik_0x}\begin{pmatrix}\frac{-i\sigma''(0)A^2}{6a_ck_0^4(21k_0^2-5\delta_c)}\\ \frac{8a_c k_0^3 \sigma''(0)A^2}{6a_ck_0^4(21k_0^2-5\delta_c)}\end{pmatrix})+O(|A|^3).$$

We will again end this subsection with remarks:
\begin{remark}
The form of $\Phi(A, A^*)$ is inherited from the symmetry of the system \eqref{bifurcation equation}.
\end{remark}
\begin{remark}\label{r6}
The claim right before the equation \eqref{v_1} in particular shows that condition $(d)$ is a consequence of conditions $(a)$, $(b)$ and $(c)$ in the definition of $\mathcal A(k_0)$.
\end{remark} 

\subsection{Dynamics on center manifold}
In this subsection, we compute the reduced dynamics for the system \eqref{system} or equivalently the system \eqref{bs} or \eqref{bifurcation equation}.

By our specific parametrization of the center space through complex-conjugate coordinates, the full dynamics of the system \eqref {system} or equivalently the system \eqref{bs} or \eqref{bifurcation equation} on center manifold is given by 
\begin{align*}
&\frac{d}{dt}(A\xi+A^*\xi+\Psi(A,A^*))\\=&\mathcal{L}(a,\delta)(A\xi+A^*\xi+\Psi(A, A^*))+\mathcal N(A\xi+A^*\xi+\Psi(A, A^*))\\
=&\mathcal{L}(a,\delta)(A\xi+A^*\xi^*)+R_{20}(A\xi+A^*\xi^*+\Psi(A, A^*), A\xi+A^*\xi^*+\Psi(A, A^*))\\&+R_{30}(A\xi+A^*\xi^*+\Psi(A, A^*), A\xi+A^*\xi^*+\Psi(A, A^*),A\xi+A^*\xi^*\\&+\Psi(A, A^*))+ \tilde{R}(A\xi+A^*\xi^*+\Psi(A, A^*))\\
\end{align*}

In view of flow invariance  and by direct computations through expanding the left-hand side or directly referring to page 245 of \cite{MW2} (or pages 56-62 of \cite{MW1}), we know that the second order approximation of the dynamics on the center manifold is given by
\begin{equation}
\frac{d}{dt}(A(t)\xi+A(t)\xi^*)=\mathbbm P \mathcal L(a,\delta)(A\xi+A\xi^*+\Phi(A,A^*))+\mathbbm P R_{20}(A\xi+A^*\xi^*+\Phi(A,A^*))+\sum_{j=2}^3\mathbbm P R_{j0}(A, A^*).
\end{equation}

Next, we proceed to compute the dynamics on the center manifold.  During the procedure, we make use of the trivial fact that $\int_{-\pi}^{\pi}e^{ikx}\,dx=0$ for any nonzero integer $k$. Later, we will also see that the second approximation together with the latter fact will give enough insight for the full dynamics on the center manifold(s).

First, we compute $\mathbbm P R_{20}(A,A^*)$. From former analysis, we know that $R_{20}\in Z_h$, which yields $\mathbbm P R_{20}(A,A^*)=0$.

Second, we compute $\mathbbm P R_{20}(A\xi+A^*\xi^*+\Phi(A, A^*), A\xi+A^*\xi^*+\Phi(A, A^*))$. To proceed, we first observe that

$$(A\xi+A^*\xi^*+\Phi(A, A^*))^{(1)}=Ae^{ik_0x}+A^*e^{-ik_0x}+i(e^{2ik_0x}\phi_1-e^{-2ik_0x}\phi_1^{*});$$
$$\partial_x (A\xi+A^*\xi^*+\Phi(A, A^*))^{(1)}=ik_0 (Ae^{ik_0x}-A^*e^{-ik_0x})-2k_0(e^{2ik_0x}\phi_1+e^{-2ik_0x}\phi_1^*).$$

Consequently, we have 

$$(A\xi+A^*\xi^*+\Phi(A, A^*))^{(1)}\partial_x (A\xi+A^*\xi^*+\Phi(A, A^*))^{(1)}=\mathcal Q_1+\mathcal Q_2 +\mathcal Q_3 +\mathcal Q_4$$
where

\begin{align*}\mathcal Q_1&=ik_0(Ae^{ik_0x}+A^*e^{-ik_0x})(Ae^{ik_0x}-A^*e^{-ik_0x})\\&=ik_0(A^2 e^{2ik_0x}-A^{*2}e^{-2ik_0x});\end{align*}

\begin{align*}
\mathcal Q_2&=-2k_0(Ae^{ik_0x}+A^*e^{-ik_0x})(e^{2ik_0x}\phi_1+e^{-2ik_0x}\phi_1^*)\\&=-2k_0(A\phi_1 e^{3ik_0x}+A\phi_1^* e^{-ik_0x}+A^*\phi_1 e^{ik_0x}+A^*\phi_1^*e^{-3ik_xx});
\end{align*}

\begin{align*}
\mathcal Q_3&=-k_0(e^{2ik_0x}\phi_1-e^{-2ik_0x}\phi_1^*)(Ae^{ik_0x}-A^*e^{-ik_0x})\\&=-k_0(A\phi_1 e^{3ik_0x}-A^*\phi_1 e^{ik_0x}-A\phi_1^* e^{-ik_0x}+A^*\phi_1^*e^{-3ik_xx});
\end{align*}

\begin{align*}
\mathcal Q_4&=-2ik_0(e^{2ik_0x}\phi_1-e^{-2ik_0x}\phi_1^*)(e^{2ik_0x}\phi_1+e^{-2ik_0x}\phi_1^*)\\&=-2ik_0(e^{4ik_0x}\phi_1^2-e^{-4ik_0x}\phi_1^{*2}).
\end{align*}

Now we compute a specific form of $R_{20}(A\xi+A^*\xi^*+\Phi(A, A^*), A\xi+A^*\xi^*+\Phi(A, A^*))$ as follows
\begin{align*}
&R_{20}(A\xi+A^*\xi^*+\Phi(A, A^*), A\xi+A^*\xi^*+\Phi(A, A^*))\\&=\frac{\sigma''(0)}{2}\partial_x\begin{pmatrix}0\\ ((A\xi+A^*\xi^*+\Phi(A, A^*)^2 \end{pmatrix}\\
&=\sigma''(0)\begin{pmatrix}0\\ (A\xi+A^*\xi^*+\Phi(A, A^*)^{(1)}\partial_x (A\xi+A^*\xi^*+\Phi(A, A^*))^{(1)}\end{pmatrix}\\
&=\sigma''(0)\begin{pmatrix}0\\ \mathcal Q_1+\mathcal Q_2 +\mathcal Q_3 +\mathcal Q_4\end{pmatrix}.
\end{align*}

Now, it is easy to observe that

$$\langle  \eta^*, \begin{pmatrix}0\\ \mathcal Q_1  \end{pmatrix}\rangle=0,\,\, \langle  \eta^*, \begin{pmatrix}0\\ \mathcal Q_4  \end{pmatrix}\rangle=0;$$
$$\langle  \eta^*, \begin{pmatrix}0\\ \mathcal Q_2  \end{pmatrix}\rangle=\int_{-\pi}^{\pi}(-\kappa e^{-ik_0x})(-2k_0A^*\phi_1 e^{ik_0x})\,dx=4\pi\kappa k_0 A^*\phi_1;$$
$$\langle  \eta^*, \begin{pmatrix}0\\ \mathcal Q_3  \end{pmatrix}\rangle=\int_{-\pi}^{\pi}(-\kappa e^{-ik_0x})(k_0A^*\phi_1 e^{ik_0x})\,dx=-2\pi\kappa k_0 A^*\phi_1.$$
From the expression of $\mathcal Q_j$ for $1\leq j\leq 4$ and the above expression on $R_{20}(A\xi+A^*\xi^*+\Phi(A, A^*), A\xi+A^*\xi^*+\Phi(A, A^*))$, we obtain

\begin{align*}
&\mathbbm P R_{20}(A\xi+A^*\xi^*+\Phi(A, A^*), A\xi+A^*\xi^*+\Phi(A, A^*))\\&=\langle\eta^*, R_{20}(A\xi+A^*\xi^*+\Phi(A, A^*)\rangle \xi+\langle \eta,R_{20}(A\xi+A^*\xi^*+\Phi(A, A^*)\rangle \xi^*\\
&=\sigma''(0)\sum_{j=1}^{4}\langle \eta^*, \begin{pmatrix}0\\ \mathcal Q_j  \end{pmatrix} \rangle\xi+\sigma''(0)\sum_{j=1}^{4}\langle \eta, \begin{pmatrix}0\\ \mathcal Q_j  \end{pmatrix} \rangle\xi^*\\
&=2\pi k_0\sigma''(0)(\kappa A^*\phi_1\xi+\kappa^* A\phi_1^*\xi^*)
\end{align*}

Third, we compute $\mathbbm PR_{30}(A,A^*)$. First, we note that
\begin{align*}
R_{30}(A, A^*)&=\begin{pmatrix}0\\ \frac{\sigma'''(0)}{6}\partial_x((A\xi+A^*\xi^*)^{(1)3})  \end{pmatrix}\\
&=\begin{pmatrix}0\\ \frac{\sigma'''(0)}{2}(A\xi+A^*\xi^*)^{(1)2}\partial_x((A\xi+A^*\xi^*)^{(1)})  \end{pmatrix}\\
&=\begin{pmatrix}0\\ \frac{ik_0\sigma'''(0)}{2}(Ae^{ik_0x}+A^*e^{-ik_0x})(A^2e^{2ik_0x}-A^{*2}e^{-2ik_0x})) \end{pmatrix}\\
&=\begin{pmatrix}0\\ \frac{ik_0\sigma'''(0)}{2}(A^3e^{3ik_0x}-AA^{*2}e^{-ik_0x}+A^2A^*e^{ik_0x}-A^{*3}e^{-3ik_0x}) \end{pmatrix}
\end{align*}

Now, we have

\begin{align*}
\langle \eta^*, R_{30}(A,A^*)\rangle&=\langle \eta^*,  \begin{pmatrix}0\\ \frac{ik_0\sigma'''(0)}{2}(-AA^{*2}e^{-ik_0x})\end{pmatrix}\rangle\\
&=\int_{\pi}^{\pi}(-\kappa e^{ik_0x})(\frac{ik_0\sigma'''(0)}{2}A^2A^*e^{ik_0x})\,dx\\
&=-i\pi\kappa\sigma'''(0)k_0A^2A^*.
\end{align*}

Consequently, we obtain

\begin{align*}
\mathbbm PR_{30}(A,A^*)&=\langle\eta^*,  R_{30}(A,A^*)\rangle \xi+\langle\eta ,R_{30}(A,A^*)\rangle \xi^*\\&=-i\pi\kappa\sigma'''(0)k_0A^2A^*\xi-i\pi\kappa\sigma'''(0)k_0AA^{*2}\xi^*
\end{align*}
where we have used the fact that $i\kappa$ is real.

Fourth, we compute the remaining term 
$$\mathbbm P(\mathcal L(a,\delta)-\mathcal L(a_c,\delta_c))(A\xi+A\xi^*+\Phi(A,A^*)).$$

For this purpose, we recall that
$$\mathcal L(a,\delta)-\mathcal L(a_c,\delta_c)= \begin{pmatrix}-\nu_1\partial_x^4 & 0\\ 0 & -\nu_2\partial_x^2\end{pmatrix}.$$

We note that

\begin{align*}
\partial^4_x(A\xi+A^*\xi^*+\Phi(A,A^*))^{(1)}&=\partial^4_x(Ae^{ik_0x}+A^*e^{-ik_0x}+i(e^{2ik_0x}\phi_1-e^{-2ik_0x}\phi_1^*))\\
&=k_0^4Ae^{ik_0x}+k_0^4 A^* e^{-ik_0x}+i((2k_0)^4e^{2ik_0x}\phi_1-(2k_0)^4e^{-2ik_0x}\phi_1^*)
\end{align*}
and 
\begin{align*}
\partial^2_x(A\xi+A^*\xi^*+\Phi(A,A^*))^{(2)}&=\partial^2_x[Ae^{ik_0x}(-ia_ck_0^3)+A^*e^{-ik_0x}(ia_c k_0^3)+i(e^{2ik_0x}\phi_2-e^{-2ik_0x}\phi_2^*)]\\
&=ia_c k_0^5Ae^{ik_0x}-ia_c k_0^5 A^* e^{-ik_0x}+i[-(2k_0)^2e^{2ik_0x}\phi_2+(2k_0)^2)e^{-2ik_0x}\phi_2^*].
\end{align*}

Hence, we can obtain

\begin{align*}
&\langle \eta^*, (\mathcal L(a,\delta)-\mathcal L(a_c,\delta_c))(A\xi+A\xi^*+\Phi(A,A^*) \rangle\\
&= \langle\eta^*, \begin{pmatrix}-\nu_1\partial^4_x(A\xi+A^*\xi^*+\Phi(A,A^*))^{(1)}\\  -\nu_2\partial^2_x(A\xi+A^*\xi^*+\Phi(A,A^*))^{(2)}\end{pmatrix}\rangle\\
&=\langle\eta^*, e^{-ik_0x}\begin{pmatrix}(-\nu_1) k_0^4A^*\\ (-\nu_2)(-ia_c k_0^5A^*)\end{pmatrix}\rangle\\
&=2\pi(i\kappa)k_0^5[(\delta_c-k_0^2)\nu_1+a_c \nu_2]A,
\end{align*}
which yields
\begin{equation}
\mathbbm P(\mathcal L(a,\delta)-\mathcal L(a_c,\delta_c))(A\xi+A\xi^*+\Phi(A,A^*))=2\pi(i\kappa)k_0^5[(\delta_c-k_0^2)\nu_1+a_c\nu_2](A\xi+A^*\xi^*).
\end{equation}

To sum up, we get the reduced dynamics on the center manifold given by the following equation on $A=A(t)\in \mathbbm C^1$
\begin{align*}
&\frac{d}{dt}A(t)=2\pi(i\kappa)k_0^5[(\delta_c-k_0^2)\nu_1+a_c\nu_2]A(t)+2\pi\sigma''(0)\kappa A^*(t)\phi_1-i\pi\kappa\sigma'''(0)k_0A^2(t)A^*(t)+O(|A|^4)\\
&=\frac{1}{(a_c+1)k_0^2-\delta_c}\Big( k_0^4[(\delta_c-k_0^2)\nu_1+a_c\nu_2] A(t)+[\frac{\sigma''(0)^2}{6a_c k_0^4(21k_0^2-5\delta_c)}-\frac{\sigma'''(0)}{2}]A(t)^2A(t)^* \Big)+O(|A|^4),
\end{align*}
and its complex conjugate equation on $A^*$.
In next subsection, we will strengthen the dynamics to be \eqref{rd} with $g(r)\equiv0$.

\subsection{Analysis of the dynamics on center manifold} \label{analysis}
In this subsection, we shall analyze the full dynamics of \eqref{system} by using the dynamics on center manifolds and therefore prove our main results.
To analyze the dynamics of the system \eqref{bifurcation equation}, it is sufficient to analyze the $A(t)$ equation or equivalently $A^*(t)$ equation. If we introduce polar coordinates $A(t)=r(t)e^{i\theta(t)}$, we get after simple computations that
\begin{equation}\label{pe}
\begin{cases}
\frac{d}{dt}r(t)=\frac{1}{(a_c+1)k_0^2-\delta_c}\Big(k_0^4 [(\delta_c-k_0^2)\nu_1+a_c\nu_2] r(t)+[\frac{\sigma''(0)^2}{6a_ck_0^4(21k_0^2-5\delta_c)}-\frac{\sigma'''(0)}{2}]r(t)^3 \Big)+O(|r|^4)\\
\frac{d}{dt}\theta(t)=O(r^4).
\end{cases}
\end{equation}

When we approximate the center manifold to an arbitrary order for sufficient smooth flux function $\sigma(\tau)$, using the cancellation given by  $\int_{-\pi}^{\pi}e^{ikx}\,dx=0$ for any nonzero integer $k$, we obtain that the higher order terms ($\geq4$) in the center manifold dynamics $\frac{d}{dt}A(t)$ appear as $AP(|A|^2)$ where $P(\cdot)$ is a one variable polynomial with in general complex coefficients and without constant term and first order term. As a demonstration to make this point clear, we can consider typical terms to appear in the $A(t)$ equation, say $\langle \eta, \mathbbm P R_{n0}(A, A^*) \rangle$ where $R_{n0}(A, A^*)=R_{n0}(A\xi+A^*\xi^*, ..., A\xi+A^*\xi^*)$. In view of the definitions of $\eta$, $\xi$ and the inner product, we easily see that $\langle \eta, \mathbbm P R_{n0}(A, A^*) \rangle=0$ for $n$ even and $\langle \eta, \mathbbm P R_{n0}(A, A^*) \rangle=\Omega A^{*k}A^{k+1}=\Omega |A|^{2k}A$ for $n=2k+1$ for $k\geq 2$ for some number $\Omega$. 


Denote $\mathbbm a:=\mathbbm a(k_0, a_c, \delta_c)=\frac{k_0^4}{(a_c+1)k_0^2-\delta_c}$ which is nonzero and real,  and  
$$\mathbbm b:=\mathbbm b(k_0, a_c, \delta_c, \sigma''(0), \sigma'''(0))=\frac{1}{(a_c+1)k_0^2-\delta_c}[\frac{\sigma''(0)^2}{6a_c k_0^4(21k_0^2-5\delta_c)}-\frac{\sigma'''(0)}{2}]
.$$
Now it is natural to choose the bifurcation parameter $\mu$ as
\begin{equation}\label{bp}
\mu=\mu(\nu)=\mu(\nu_1, \nu_2)=(\delta_c-k_0^2)\nu_1+a_c\nu_2.
\end{equation}

Due to the above analysis, we conclude that the full $A(t)$ dynamics on center manifold are actually given by
\begin{equation}\label{fd}
\frac{d}{dt}A(t)= \mathbbm a\mu A+\mathbbm b |A|^2A+O(|A|^5)\\
\end{equation}
in which the $O(|A|^5)$ term is given by $AP(|A|^2)$. If further $\sigma(\tau)$ is assumed to be smooth (which is a technical assumption), then
$P(z)=\sum_{j\geq2}{\mathbbm c_j}z^j$ with $\mathbbm c_j$ being complex in general.

Using subindices $r$ and $i$ to represent real and imaginary parts respectively, the dynamics of equation \eqref{fd} can always be written in polar coordinates as
 \begin{equation}
 \begin{cases}
\frac{d}{dt}r(t)= \mathbbm a_r\mu r+\mathbbm b_r r^3+O(r^5)\\
\frac{d}{dt}\theta(t)=\mathbbm a_i \mu r+\mathbbm b_i r^3+O(r^5)
\end{cases}
\end{equation}
where the $O(r^5)$ term in the $\frac{d}{dt}r(t)$-equation is given by $\sum_{j\geq 2}\mathbbm c_{jr} r^{2j+1}$ while the $O(r^5)$ term in the $\frac{d}{dt}\theta(t)$-equation is given by $\sum_{j\geq 2}\mathbbm c_{ji} r^{2j+1}$ if $\sigma(\tau)$ is smooth.

Noticing that $\mathbbm a$ and $\mathbbm b$ are real, i.e., $\mathbbm a_r=\mathbbm a, \mathbbm a_i=0$ and $\mathbbm b_r=\mathbbm b, \mathbbm b_i=0$, we know that the above system takes the following form 

 \begin{equation}\label{rd}
\begin{cases}
\frac{d}{dt}r(t)= f(r,\mu)\\
\frac{d}{dt}\theta(t)=g(r)
\end{cases}
\end{equation}
where $f(r,\mu):=\mathbbm a\mu r+\mathbbm b r^3+O(|r|^5)$ and $g(r)=\sum_{j\geq 2}\mathbbm c_{ji} r^{2j+1}=O(r^5)$. Notice that $f(r,\mu)$ and $g(r)$ are both real polynomials.

Let us first analyze the radial equation, which is independent of the angular equation. Consider the radial equation $f(r,\mu)=\mathbbm a\mu r+\mathbbm b r^3+O(|r|^5)=0$.  From our former analysis, we can write $f(r,\mu)$ as $f(r,\mu)=rh(r^2, \mu)$ where $h(r,\mu)=\mathbbm a\mu+\mathbbm b r^2+o(|\mu|+r^2)$ with $h$ being at least  a $C^1$-function.  First, we observe that $0$ is always an equilibrium.  Noticing that $h(0,0)=0$ and $\frac{\partial}{\partial (r^2)}h(0,0)=\mathbbm a\neq0$, we conclude by the implicit function theorem that $\mu$ is a function of $r^2$ in a neighborhood of $0$, i.e., $\mu=\tilde g(r^2)$ with $g(0)=0$ for some function $\tilde g$ in a neighborhood of $(0, 0)$. In view that $h(0,0)=0$, we know that the Taylor expansion of $\tilde g$ is $\mu=\tilde g(r^2)=-\frac{\mathbbm a}{\mathbbm b}r^2+o(r^4)$. Then we obtain that there is a curve of nontrivial  equilibria in the $(\mu, r)$-plane that has a second order tangency at $(0, 0)$ to the graph of $\mu=-\frac{\mathbbm b}{\mathbbm a}r^2+o(r^4)$.

Moreover, for the truncated equation $f_{0}(r,\nu):=\mathbbm a\mu r+\mathbbm b r^3=0$, we observe that $r=0$ is always a solution.  Meanwhile, $r=\sqrt{\frac{-\mathbbm a\mu}{\mathbbm b}}>0$, i,e.,
$\mu=-\frac{\mathbbm b}{\mathbbm a}r^2$ is another solution in the parameter range such that $\frac{-\mathbbm a\mu}{\mathbbm b}>0$. In the parameter range $\frac{-\mathbbm a\mu}{\mathbbm b}<0$, $0$ is the only solution. The truncated differential equation $\frac{dr}{dt}=\mathbbm a r+\mathbbm b r^3$ can be solved explicitly: $r^2(t)=\frac{\mathbbm a \mu r_0^2}{a\mu e^{-2\mathbbm a \mu t}+\mathbbm b r_0^2 (e^{-2\mathbbm a\mu t}-1)}$.

The above two paragraphs show that the truncated equation and the full equation have the same number of equilibria in a neighborhood of origin $\mu=0$, which are $o(|\mu|^{1/2})$-close to each other.
Another fact is that the dynamics of the $r$-equation is exactly that of a standard pitchfork bifurcation if we allow $r<0$. As in our case here $r(t)=|A(t)|$, there is only one bifurcated nonzero equilibrium, which is different from the standard pitchfork bifurcation with two bifurcated equilibria. Hence, we may regard the dynamic here as \textit{half pitchfork bifurcation}.

Now, we analyze the angular equation. We need to distinguish two different dynamics: (i) $g(r)=0$, in other words, all the $\mathbbm c_j$ are real and the angular equation is trivial,  (ii) there is some $\mathbbm c_{j_{0}i}\neq0$.
Next we let the nontrivial equilibrium of radial equation be $r_{\mu}$. If the angular equation were nontrivial, then $\theta(t)-\theta_0=g(r_{\mu})t\,\,\mbox{mod}\,\, 2\pi=O(|\mu|^{5/2})$ and due to $SO(2)$-symmery (obviously, $SO(2)$ is a subgroup of $O(2)$), i,e., rotational symmetry, any nontrivial equilibrium $r_{\mu}$ would correspond to a \textit{rotation wave} with radius $r_{\mu}$ and angular speed $O(|\mu|^{5/2})$, which is slower in order compared with amplitude. However, next we will use symmetry to exclude the possibility (ii), i.e., we will show $g(r)\equiv0$. The main idea is that the reduced equation on the center manifold(s) inherit 
the symmetry of the original system by applying (c) of Theorem \ref{center manifold theorem}.

In view of our choice of parametrization of the center space and that $
R_{\phi}\xi=e^{i\phi}\xi, S\xi=\xi^*$, we know on the center space coordinated by $(A, A^*)$, the 
operator $R_{\phi}$, $S$ act as the following $2\times 2$ matrices respectively:
$$\begin{pmatrix} e^{k_0\phi} & 0\\ 0 & e^{-k_0\phi} \end{pmatrix}, \quad \begin{pmatrix} 0 & 1\\ 1 & 0 \end{pmatrix}.$$
Therefore, due to the inherited symmetry, \eqref{fd} must have the form $\frac{d}{dt}A(t)= F(A, A^*, \mu)$ with $F$ satisfying for any $\phi$,
$$F(e^{ik_0\phi}A, e^{-ik_0\phi}A^*, \mu)=e^{ik_0\phi} F(A, A^*, \mu),\quad F(A^*, A, \mu)=F(A, A^*, \mu)^*.$$
By choosing $\phi=-\frac{\arg A}{k_0}$ and then $\phi=\frac{\pi}{k_0}-\frac{\arg A}{k_0}$, we find $F(|A|, |A|, \mu)=e^{-i\arg A}F(A, A^*, \mu)$ and $F(-|A|, -|A|, \mu)=-e^{-i\arg A}F(A, A^*, \mu)$ respectively. Therefore, $F(|A|, |A|, \mu)$ is odd in $|A|$. This implies $F(|A|, |A|,\mu)=|A|G(|A|,\mu)$ for some even function $G(|A|,\mu)$ in its first argument. Therefore, we have $F(A, A^*,\mu)=e^{i\arg A}|A|G(|A|,\mu)=AG(|A|,\mu)$. For our analysis of dynamics on center manifolds, it suffices to consider the case when $F$ is a polynomial. In this case, the analysis above implies $G(|A|, \mu)$ is an even polynomial in $|A|$. Further, the condition $F(A^*, A, \mu)=F(A, A^*, \mu)^*$ forces the coefficient of this even polynomial are real. Hence, we have shown that $g(r)\equiv 0$.

By now, we have shown our main theorem-Theorem \ref{thm}.

\section{Related issues}\label{last}
\setcounter{equation}{0}
\setcounter{theorem}{0}

In this section, we discuss some other subtle points given by the following three subsections.
\subsection{Isolation, Bifurcation Parameters and Measurement} In our bifurcation analysis,  we always isolate the bifurcation parameters in our way of computing dynamics on the center manifold, which is natural. Here we introduced the bifurcation parameter $\mu$ through the bifurcation vector $\nu\in\mathbbm R^2$. Mathematically, the results involving this parameter vector take care of variations of the control parameters $a$ and $\delta$ in one go. Though wide enough, these results are not easy to measure due to the change of two parameters, and also not sharp mathematically in certain cases. We consider the following two cases:

(i) If we fixed $a_c$ and consider the dynamics of $\partial_t U=\mathcal L(a_c,\delta_c)U+(\mathcal L(a,\delta)-\mathcal L(a_c,\delta_c))U+N(U)$ when $\delta$ varies around $\delta_c$. Since $\mathcal L(a,\delta_c)-\mathcal L(a_c,\delta_c)=\begin{pmatrix}0 & 0\\ 0 & -(\delta-\delta_c)\partial_x^2 \end{pmatrix}$,
then it is better for the ease of measurement to introduce the bifurcation parameter $\Gamma_2:=a_c k_0^4(k_0^2-\delta)+\sigma'(0)$. The relation of $\Gamma_2$ and $\nu_2$ is given by $\Gamma_2=-a_c k^4_0 \nu_2$ through the difference $a_c k_0^4 (k_0^2-\delta_c)+\sigma'(0)=0$ and $a_c k_0^4(k_0^2-\delta)+\sigma'(0)=\Gamma_2$. In this case, we have $R(U,\Gamma_2)=\begin{pmatrix}0 & 0\\ 0 & -(\delta-\delta_c) \end{pmatrix}U+N(U)\in H^2_{per}(-\pi, \pi)$. As a consequence, we can choose space triplet $Z\subset Y\subset X$ as $Z=H^4_{per}(-\pi, \pi)$, $Y=H^2_{per}(-\pi, \pi)$ and $X=L^2_{per}(-\pi, \pi)$. Then the corresponding bifurcation occurs in $Y=H^2_{per}(-\pi, \pi)$. This is sharp.

(ii) Similarly, if we fixed $\delta_c$ and consider the dynamics of $\partial_t U=\mathcal L(a_c,\delta_c)U+(\mathcal L(a,\delta)-\mathcal L(a_c,\delta_c))U+N(U)$ when $a$ varies around $a_c$. Since $\mathcal L(a, \delta_c)-\mathcal L(a_c,\delta_c)=\begin{pmatrix}-(a-a_c)\partial_x^4 & 0\\ 0 & 0 \end{pmatrix}$,
then it is better for the ease of measurement to introduce the bifurcation parameter $\Gamma_1:=a k_0^4(k_0^2-\delta_c)+\sigma'(0)$. The relation of $\Gamma_1$ and $\nu_1$ is given by $\Gamma_1= k^4_0(k_0^2-\delta_c) \nu_1$ through the difference $a_c k_0^4 (k_0^2-\delta_c)+\sigma'(0)=0$ and $a k_0^4(k_0^2-\delta_c)+\sigma'(0)=\Gamma_1$. In this case, we have $R(U,\Gamma_2)=\begin{pmatrix}0 & 0\\ 0 & -(\delta-\delta_c) \end{pmatrix}U+N(U)\in L^2_{per}(-\pi, \pi)$. As a consequence, we can not  upgrade the  space triplet choice $Z\subset Y\subset X$ as in (i) above but shall choose $Z=H^4_{per}(-\pi, \pi)$, $Y=X=L^2_{per}(-\pi, \pi)$. Of course, the bifurcation occurs in $Y=L^2_{per}(-\pi, \pi)$ in this case. A drawback is that the observational bifurcation parameter $\Gamma_1$ is degenerate if $k_0^2=\delta_c$ which is allowed.

\subsection{Symmetry and Computations} For our system \eqref{bifurcation equation}, we can verify that it is equivariant under the $O(2)$-group action (hence under $SO(2)$-group action) entirely similarly as in \cite{LY} (see Proposition 5.1 and Section 9 of \cite{LY}). In the current paper, we see that symmetry does play its role and the effect is embodied by the fact $\mathcal A(k_0)=\mathcal A(-k_0)$, by the form of the eigenvalue equation\eqref{eigen} for $M_k$ where $k$ enters the equation through $k^2$, by the parametrization of center space, during the computation of the dynamics on center manifold (for example, the form of $\Phi(A, A^*)$), by the paragraph below \eqref{pe}. However, the computations of the dynamics on center manifold in the current work did not refer to normal form theory. The reason is that the representation of the linear operator $\mathcal L(a_c,\delta_c)$ on the center space coordinated through the conjugate pair $(A, A^*)$ is the two by two zero matrix $0_{2\times 2}$. We refer the reader to Theorem 6.2 in \cite{LY} or Theorem 7.11 in \cite{Yao} for this point. Though we did not use the normal form theory to help us to do computations, it is interesting to compare the $O(2)$-equivariant Hopf bifurcations in \cite{Yao, LY} and the bifurcations here. In particular, both $\sigma'(0)$ and $\sigma''(0)$ are allowed to be zero in the current paper, which means the important fact that \textit{hyperbolicity} and \textit{genuine nonlinearity} (see page 90 of \cite{Br2} for the two definitions; see also \cite{BD, Lax}) are not necessary for the current study.

\subsection{Choice of spaces } This can be regarded as a remark for Section 2 and a continuation of Subsection 7.1 and we content ourselves in touching only several points among those we can make. In the following discussion, we talk about infinitely dimensional dynamics. There are two main ways of choosing working spaces in the literature: one is a space pair, say $D\subset X$, and the other is a space triplet $Z\subset Y\subset X$ as we adopted. In the former, one usually choose $D$ as the domain of the linear operator on a Banach space $X$ which is properly large. While in the latter, one may choose $Z$ to be a subspace of the domain of the linear operator on the large ambient Banach space $X$ while $Y$ to be the interpolation space which takes care of the nonlinearities in a specific problem. Consequently, bifurcations occurs in $X$ in the former and in $Y$ in the latter. Obviously, the former way of choosing working spaces can be regarded as a special case of the latter one. It is due to this elementary fact that the space triplet choice way usually locates more precisely the spaces in which bifurcation dynamics occur. Carefully checking the proofs in the center manifold theory, we also see that the theory based on space pairs is more effective if the linear operators involved are sectorial (which is the case for dissipative partial differential equations) while the theory established with space triplets can deal with linear operators that are not sectorial. Hence the latter has more applications besides dissipative partial different equations. \\


\end{document}